\documentclass{dakota-article}
\pdfoutput=1

\usepackage{amsmath,amssymb,graphicx,bbm,xcolor}
\usepackage{amsthm,verbatim}
\usepackage{mathrsfs}
\usepackage{url}
\usepackage[linesnumbered,algoruled,boxed,lined]{algorithm2e}

\newcommand{\widebar}[1]{\bar{#1}}

\usepackage{caption}
\def\rotation{a}

\usepackage{enumitem}
\usepackage{mathtools}
\DeclarePairedDelimiter{\ceil}{\lceil}{\rceil}

\usepackage{colortbl}
\usepackage{booktabs}
\usepackage{tabularx}
\usepackage{tikz}
\usetikzlibrary{calc}
\pgfdeclarelayer{background}
\pgfdeclarelayer{foreground}
\pgfsetlayers{background,main,foreground}


\usepackage{mathdefs}

\newcommand{\bs}[1]{\boldsymbol{#1}}
\newcommand{\realsmap}[2]{\reals^{#1}\rightarrow\reals^{#2}}
\def\reals{\mathbb{R}}

\def\SNL{Center for Computing Research, Sandia  National Laboratories, Albuquerque, NM, USA.}
\def\UTAH{Department of Mathematics, and Scientific Computing and Imaging (SCI) Institute, University of Utah,  Salt Lake City, UT, USA.}

\def\mycorauthor{J.D Jakeman}
\corauthor{\mycorauthor}
\coremail{jdjakem@sandia.gov}
\title{Generation and application of multivariate polynomial quadrature rules}
\date{\today}
\author{J.D Jakeman\thanks{\SNL}, A.Narayan\thanks{\UTAH}}
\shortauthor{J.D Jakeman, A. Narayan}
\shorttitle{Multivariate polynomial quadrature rules}
\funding{See acknowledgements}

\begin{document}
\pagestyle{myheader}
\maketitle

\begin{abstract}
  The search for multivariate quadrature rules of minimal size with a specified polynomial accuracy has been the topic of many years of research. Finding such a rule allows accurate integration of moments, which play a central role in many aspects of scientific computing with complex models. The contribution of this paper is twofold. First, we provide novel mathematical analysis of the polynomial quadrature problem that provides a lower bound for the minimal possible number of nodes in a polynomial rule with specified accuracy. We give concrete but simplistic multivariate examples where a minimal quadrature rule can be designed that achieves this lower bound, along with situations that showcase when it is not possible to achieve this lower bound. Our second main contribution comes in the formulation of an algorithm that is able to efficiently generate multivariate quadrature rules with positive weights on non-tensorial domains. Our tests show success of this procedure in up to 20 dimensions. We test our method on applications to dimension reduction and chemical kinetics problems, including comparisons against popular alternatives such as sparse grids, Monte Carlo and quasi Monte Carlo sequences, and Stroud rules. The quadrature rules computed in this paper outperform these alternatives in almost all scenarios. 
\end{abstract}

\begin{keywords}

Multivariate integration, Moment-matching quadrature, Uncertainty
quantification, Dimension reduction 

\end{keywords}

\section{Introduction}\label{sec:intro}
Let $D \subset \R^d$ be a domain with nonempty interior. Given a finite, positive measure $\mu$ on $D$ and a $\mu$-measurable function $f: D \rightarrow \R$, our main goal is computation of
\begin{align*}
\int_D f(x) \dx{\mu(x)} 
\end{align*}
The need to evaluate such integrals arises in many areas, including finance, stochastic programming, robust design and uncertainty quantification. Typically these integrals are approximated numerically by quadrature rules of the form
\begin{align}\label{eq:quadrature}
  \int_D f(x) \dx{\mu(x)} &\approx \sum_{m=1}^M w_m f\left( x_m \right) 
\end{align}
Examples of common quadrature rules for high-dimensional integration are Monte Carlo and quasi Monte Carlo methods~\cite{Halton_NM_1960,Hammersley_H_book_1964,Niederreiter_book_1992,Sobol_S_IJMPC_1995}, and Smolyak integration rules~\cite{Gerstner_G_NA_1998,Gerstner_G_C_2003,Smolyak_SMD_1963}. Quadrature rules are typically designed and constructed in deference to some notion of approximation optimality. The particular approximation optimality that we seek in this paper is based on polynomial approximation.  

Our goal is to find a set of quadrature nodes $x_j \in D$, $j = 1, \ldots, M$, and a corresponding set of weights $w_j \in \R$ such that
\begin{align}\label{eq:quadrature-condition}
  \sum_{j=1}^M w_j p\left( x_j \right) &= \int_D p(x) \dx{\mu(x)}, & p &\in P_{\Lambda},
\end{align}
where $\Lambda \subset \N_0^d$ is a multi-index set of size $N$, and $P_\Lambda$ is an $N$-dimensional polynomial space defined by $\Lambda$. (We make this precise later.) $P_\Lambda$ can be a relatively ``standard" space, such as the space of all $d$-variate polynomials up to a given finite degree, or more intricate spaces such as those defined by $\ell^p$ balls in index space, or hyperbolic cross spaces.

In this paper we will present a method for numerically generating polynomial based cubature rules.\footnote{ Although the focus of this paper is on polynomial based quadrature rules, and the review of quadrature rules contained here reflects this focus, other types of quadrature rules exist. Polynomial based cubature rules rules are useful when the integrand $f$ has high-regularity. However when the function has less regularity, such as piecewise continuity alternative rules based upon other basis functions, such as piecewise polynomials. For example Simpson's rule for univariate functions, sparse grids based upon piecewise polynomials for multivariate functions~\cite{Bungartz_G_AN_2004} and the numerous adaptive versions of these methods e.g.~\cite{Pfluger_PB_JC_2010}} This paper provides two major contributions to the existing literature. Firstly we provide a lower bound on the number points that make up a polynomial quadrature rule. Our analysis is straightforward, but to the authors knowledge this is the first reported bound of its kind. Our second contribution is a numerical method for generating quadrature rules that are exact for a set of arbitrary polynomial moments. Our method has the following features: 
\begin{itemize}
  \item Positive quadrature rules are generated.
  \item Any measure $\mu$ for which moments are computable are applicable. Many existing quadrature methods only apply to tensor-product measures. Our method constructs quadrature rules for measures with, for example, non-linear correlation between variables.
\item Analytical or sample-based moments may be used. In some settings it may be possible to compute moments of a measure exactly, but in other settings only samples from the measure are available. For example, one may wish to integrate a function using Markov Chain Monte Carlo-generated samples from a posterior of a Bayesian inference problem.
\item A quadrature that is faithful to arbitrary sets of moments may be generated. Many quadrature methods are exact for certain polynomial spaces, for example total-degree or sparse grid spaces. However, some functions may be more accurately represented by alternative polynomial spaces, such as hyperbolic cross spaces. In these situations it may be more prudent to construct rules that can match a customized set of moments. 
\item Efficient integration of ridge functions is possible. Some high-dimensional functions can be represented by a small number of linear combinations of the input variables. In this case it is more efficient to integrate these functions over this lower-dimensional coordinate space. Such a dimension-reducing transformation typically induces a new measure on a non-tensorial space of lower-dimensional variables. For example, a high-dimensional uniform probability measure on a hypercube may be transformed into a non-uniform measure on a zonotope (a multivariate polygon). 
\end{itemize}
Our algorithm falls into the class of moment-matching methods. There have been some recent attempts at generating quadrature using moment matching via optimization apporaches. These methods frequently either start with a small candidate set and add points until moments are matched~\cite{Mehrotra_P_SJO_2013}, or start with a large set of candidate points and reduce them until no more points can be removed without numerically violating the moment conditions~\cite{ryu_extensions_2014,vahid2017,Vandebos_KD_JCP_2017}. These approaches sometimes go by other names, such as scenario generation or scenario reduction methods.

This paper presents a quadrature/scenario reduction moment matching method based upon the work in ~\cite{ryu_extensions_2014}. The method in ~\cite{ryu_extensions_2014} is comprised of two steps. The first step generates a quadrature rule with $M=N$ points, the existence of which is guaranteed by Tchakaloff's theorem~\cite{Tchakaloff_BSM_1957}. The second step uses this quadrature rule as an initial guess for a local gradient-based optimization procedure that searches for a quadrature rule with $M\le N$ points.  

The initial quadrature rule is generated by drawing a large number of points from the domain of integration and then solving an inequality-constrained linear program to find a quadrature rule with positive weights that matches all desired moments. Similar approaches can be found in ~\cite{Arnst_GPR_IJNME_2012,Constantine_PW_IJNME_2014}. The $N$ points comprising this initial quadrature rule are then grouped into $M\le N$ clusters. A new approximate quadrature rule is then formed by combining points and weights within a cluster into a single point and weight. 

Our numerical method differs from ~\cite{ryu_extensions_2014} in the following ways: (i) we use a different formulation of the linear program to generate the initial quadrature rule; (ii) we show numerically that this quadrature only needs to be solved with very limited accuracy, (iii) we present an automated way of selecting the clusters from the initial rule -- in  ~\cite{ryu_extensions_2014} no method for clustering points is presented; (iv) we provide extensive numerical testing of our method in a number of settings. 

The theoretical contributions of this paper include a lower bound on the number of points in the final quadrature rule, and a definition of quasi-optimality in terms of this bound. We also provide provide a simple means of testing whether the quadrature rules generated by any method are quasi-optimal.

The remainder of the paper is structured as following. In Section \ref{sec:optimal-quadrature}~we introduce some nomenclature, define quasi-optimality, and present a means to verify if a quadrature rule is quasi-optimal. We also use these theoretical tools to analytically derive quasi-optimal rule for additive functions. In Section~\ref{sec:algorithm} we detail out algorithm for generating quadrature rules. We then present a wide range of numerical examples, in Section~\ref{sec:numerical-results}, which explore the properties of the quadrature rules that our numerical algorithm can generate.

\subsection{Existing quadrature rules}
We give a concise description of some existing quadrature rules. There are numerous approaches for computing polynomial-based quadrature rules so our goal is not to be comprehensive, but instead to concentrate on rules that are related to our proposed method.

For univariate functions Gaussian quadrature is one of the most commonly used approaches. Nodes associated to Gaussian quadrature rules are prescribed by roots of the polynomials orthogonal to the measure $\mu$ \cite{szego_orthogonal_1975}. The resulting quadrature rule is always positive (meaning the weights $w_m$ are all positive) and the rules are optimal in the sense that given a Gaussian quadrature rule of degree of exactness $p$, no rule with fewer points can be used to integrate all degree-$p$ polynomials.

When integrating multivariate functions with respect to tensor-product measures on a hypercube accurate and efficient quadrature rules can be found by taking tensor-product of one-dimensional Gaussian quadrature rules. These rules will optimal for functions that can be represented exactly by tensor-product polynomial spaces of degree $p$. However the use of such quadrature rules is limited to a small number of dimensions, say 3-4, because the number of the points in the rule grows exponentially with dimension.

Sparse grid quadrature methods have been successfully used as an alternative to tensor-product quadrature for multivariate functions \cite{Gerstner_G_NA_1998,Gerstner_G_C_2003,Smolyak_SMD_1963}. The number of points in sparse grid rules grow logarithmically with dimension for a fixed level of accuracy. Unlike tensor-product rules however the quadrature weights will not all be positive. Sparse grid quadrature delays the curse of dimensionality by focusing on integrating polynomial spaces that have high-degree univariate terms but low-degree interaction terms. 

High-dimensional cubature rules can often be more effective than
sparse grid rules when integrating functions that are well represented
by total-degree polynomials. These rules have positive weights and
typically consist of a very small number of points. However such
highly effective cubature rules are difficult to construct and are
have only been derived for a specific set of measures, integration
domains and polynomial degree of exactness~\cite{Hammer_S_MTAC_1958,Stroud_book_71,Xiu_ANM_2008}.

Tensor-product integration schemes generally produce approximations to the integral \eqref{eq:quadrature} whose accuracy scales like $M^{-r/d}$, where $r$ indicates the maximum order of continuous partial derivatives in any direction \cite{Novak_R_inbook_1997}. This convergence rate illustrates both the blessing of smoothness -- regularity accelerates convergence exponentially -- along with the curse of dimensionality -- convergence is exponentially hampered by large dimension. For sparse grids consisting of univariate quadrature rules with $O(2^l)$ points have a similar error, scaling like $O(M^{-r}l^{(d-1)(r+1)})$~\cite{Gerstner_G_NA_1998}.


A contrasting approach is given by Monte Carlo (MC) and quasi Monte Carlo (QMC) approaches. These approximations produce convergence rates of $O(M^{-\frac{1}{2}})$ and $O(\log(M)^dM^{-1})$, respectively~\cite{Caflisch_AN_1998}. MC points are random, selected as independent and identically-distributed realizations of a random variable, and QMC points are deterministically generated as sequences that minimize discrepancy.




\section{Quasi-optimality in multivariate polynomial quadrature}\label{sec:optimal-quadrature}
The goal of this section is to mathematically codify relationships between the number of quadrature points $M$ and the dimension $N$ of the polynomial space $P_\Lambda$. In particular, we provide a theoretical lower bound for $M$ for a fixed $\Lambda$. This lower bound can be used to define a notion of optimality in polynomial quadrature rules. We also provide related characterizations of optimal quadrature rules in both one and several dimensions.

\subsection{Notation}\label{sec:notation}

With $d \geq 1$ fixed, we consider a positive measure $\mu$ on $\R^d$ with support $D = \mathrm{supp}\; \mu$. This support may be unbounded. The $L^2_\mu(D)$ inner product and norm are defined as
\begin{align*}
  \left\langle f, g \right\rangle_{\mu} &\coloneqq \int_D f(x) g(x) \dx{\mu}(x), & \|f\|_\mu^2 &= \left\langle f, f \right\rangle_\mu, & L^2_\mu(D) = \left\{ f: D \rightarrow \R \; | \; \|f\|^2_{\mu} < \infty \right\}
\end{align*}
To prevent degeneracy of polynomials with respect to $\mu$ and to ensure finite moments of $\mu$, we assume
\begin{align}\label{eq:mu-assumption}
  0 < \left\| p \right\|_\mu < \infty,
\end{align}
for all algebraic polynomials $p(x)$. One can guarantee the lower inequality if, for example, there is any open Euclidean ball in $D$ inside which $\mu$ has a density function.

A point $x$ in $\R^d$ has coordinate representation $x = \left(x^{(1)}, \ldots, x^{(d)} \right) \in \R^d$; for a multi-index $\alpha \in \N_0^d$ with coordinates $\alpha = \left(\alpha^{(1)}, \ldots, \alpha^{(d)}\right)$, we have $\alpha ! = \prod_{j=1}^d \alpha^{(j)} !$ and $x^\alpha = \prod_{j=1}^d \left[ x^{(j)}\right]^{\alpha^{(j)}}$. We let $\Lambda \subset \N_0^d$ denote a multi-index set of finite size $N$. 

If $\alpha, \beta \in \N_0^d$ are any two multi-indices and $k \in \R$, we define $\alpha + \beta$, $k \alpha$, and $\left\lfloor k \alpha \right\rfloor$ component wise. The partial ordering $\alpha \leq \beta$ is true if all the component wise conditions are true. We define the following standard properties and operations on multi-index sets:
\begin{definition}
  Let $\Lambda$ and $\Theta$ be two multi-index sets, and let $k \in [0, \infty)$.
  \begin{enumerate}[labelwidth=3.5cm,itemindent=1em,leftmargin=!]
    \item[(Minkowski addition)] The sum of two multi-index sets is
      \begin{align*}
        \Lambda + \Theta &= \left\{ \alpha + \beta \; |\; \alpha \in \Lambda,\; \beta \in \Theta \right\}
      \end{align*}
    \item[(Scalar multiplication)] The expression $k \Lambda$ is defined as
      \begin{align*}
        k \Lambda &= \left\{ k \alpha\; | \; \alpha \in \Lambda \right\}.
      \end{align*}
      Note that this need not be a set of multi-indices.
    \item[(Downward closed)] $\Lambda$ is a downward closed set if $\alpha \in \Lambda$ implies that $\beta \in \Lambda$ for all $\beta \leq \alpha$.
    \item[(Downward closure)] For any finite $\Lambda$, $\widebar{\Lambda}$ is the smallest downward closed set containing $\Lambda$,
      \begin{align*}
        \widebar{\Lambda} = \left\{ \alpha \in \N_0^d \; | \; \alpha \leq \beta \textrm{ for some } \beta \in \Lambda \right\}.
      \end{align*}
    \item[(Convexity)] $\Lambda$ is convex if for any $p \in [0, 1]$ and any $\alpha, \beta \in \Lambda$, then $\left\lfloor p \alpha + (1-p) \beta \right\rfloor \in \Lambda$.
  \end{enumerate}
\end{definition}
With our notation, scalar multiplication is not consistent with Minkowski addition. In particular we have $2 \Theta \subseteq \Theta + \Theta$ in general. If $\Lambda$ is downward closed, then $\widebar{\Lambda} = \Lambda$. 

The polynomial space $P_\Lambda$ is defined by a given multi-index $\Lambda$:
\begin{align*}
  P_\Lambda &= \mathrm{span} \left\{ x^\alpha \; | \; \alpha \in \Lambda \right\}, & |\Lambda| &= N,
\end{align*}
and $P_\Lambda$ has dimension $N$ in $L^2_\mu(D)$ under the assumption \eqref{eq:mu-assumption}. Note that we make no particular assumptions on the structure of $\Lambda$. I.e., we do not assume $\Lambda$ is downward closed, but much of our theory and all our numerical examples use downward closed index sets. 

On $\N_0^d$, we will make use of the $\ell^p$ norm $\left\|\cdot\right\|_p$ for $0 \leq p \leq \infty$, and the associated ball $B_{p}(r)$ of radius $r \geq 0$ to define index sets. These sets are defined by
\begin{align*}
  B_{p}(r) = \left\{ \alpha \in \N_0^d \; | \; \left\|\alpha \right\|_p \leq r \right\}.
\end{align*}
The $\ell^p$ norms are defined for $p=0$, $0 < p < \infty$, and $p = \infty$ by, respectively,
\begin{align*}
 \left\| \alpha \right\|_0 &= \sum_{j=1}^d \mathbbm{1}_{\alpha_j \neq 0}, & \left\| \alpha \right\|_p^p &= \sum_{j=1}^d \alpha_j^p, & \left\| \alpha \right\|_\infty &= \max_{1 \leq j \leq d} \alpha_j.
\end{align*}
The index sets sets $B_{p}(r)$ are all downward closed, and are convex if $p \geq 1$. The set $B_0(r)$ equals $\N_0^d$ when $r \geq d$.




\subsection{Quasi-optimal quadrature}\label{sec:quasi-optimal quadrature}

With $M$ fixed, the theoretical and computational tractability of computing a solution to \eqref{eq:quadrature-condition} depends on $\Lambda$, $D$, and $\mu$. In particular, it is unreasonable to expect that $\Lambda$ can be arbitrarily large; if this were true then $\mu$ can be approximated to arbitrary accuracy by a sum of $M$ Dirac delta distributions, which would allow us to violate the lower inequality in \eqref{eq:mu-assumption}. There is a strong heuristic that motivates the possible size of $\Lambda$: The set $\left\{ x_1, \ldots, x_M \right\}$ represents $M d$ degrees of freedom, and varying $w_j$ ($j=1, \ldots, M$) represents an additional $M$ degrees of freedom. For an $N$-dimensional space $P_\Lambda$, \eqref{eq:quadrature-condition} can be ensured with $N$ constraints. Thus we expect for general $(\mu,D)$ that $\Lambda$ (and thus $N$) must be small enough to satisfy
\begin{align}\label{eq:counting-heuristic}
  |\Lambda| = N \leq (d + 1) M.
\end{align}
We will show that this heuristic does not always produce a faithful bound on sizes of quadrature rules; we provide instead a strict lower bound on the number of points $M$ in a quadrature rule for a given $\Lambda$.
To proceed we require the notion of `half-sets'.
\begin{definition}
  Let $\Lambda \in \N_0^d$ be a finite, nontrivial, downward-closed set. A multi-index set $\Theta$ is 
  \begin{enumerate}
    \item a half-set for $\Lambda$ if $\Theta + \Theta \subseteq \Lambda$
    \item a maximal half-set for $\Lambda$ if it is a half-set of maximal size. I.e, if $|\Theta| = L$, with 
      \begin{align}\label{eq:L-definition}
        L = L(\Lambda) &= \max \left\{ |\Theta|\; | \; \Theta \textrm{ a multi-index set satisfying } \Theta + \Theta \subseteq \Lambda \right\}
      \end{align}
  \end{enumerate}
  We call $L$ the \textit{maximal half-set size} of $\Lambda$.
\end{definition}
Recall that $2 \Theta \subseteq \Theta + \Theta$ so that the terminology ``half" should not be conflated with the operation of halving each index in an index set. If $\Lambda$ is not downward closed, it may not have any half sets. However, all nontrivial downward-closed sets have at least one nontrivial half-set (the zero set $\left\{ 0\right\}$ is one such half set). Thus maximal half sets always exist in this case, but they are not necessarily unique. An example in $d=2$ illustrates this non-uniqueness:
\begin{gather*}
  \Lambda = B_0(1) \cap B_1(2) = \left\{ (0,0),\; (0,1),\; (0,2),\; (1,0),\; (2,0) \right\}, \\
  \Theta_1 = \left\{ (0,0),\; (1,0) \right\}, \hskip 10pt  \Theta_2 = \left\{ (0,0),\; (0,1) \right\}.
\end{gather*}
We have $L(\Lambda) = 2$, and both $\Theta_1$ and $\Theta_2$ are maximal half sets for $\Lambda$. 

If $\Lambda$ is both downward-closed and convex, then its maximal half-set is unique and easily computed.
\begin{theorem}
  Let $\Lambda$ be convex and downward-closed. Then its maximal half-set $\Theta$ is unique, given by $\Theta = \left\lfloor \frac{1}{2} \Lambda \right\rfloor$.
\end{theorem}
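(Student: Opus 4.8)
The plan is to prove something slightly stronger than the two assertions (uniqueness and maximality) at once: I will show that $\Theta = \left\lfloor \frac{1}{2}\Lambda \right\rfloor$ is simultaneously a half-set for $\Lambda$ \emph{and} a superset of every half-set of $\Lambda$. Granting both facts, maximality is immediate (any competing half-set is contained in $\Theta$, hence no larger), and uniqueness follows at once (a maximal half-set has the same cardinality as $\Theta$ while being contained in it, forcing equality). So the whole argument reduces to establishing two containments.

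First I would verify that $\Theta + \Theta \subseteq \Lambda$, i.e. that $\Theta$ really is a half-set. Pick $\gamma, \delta \in \Theta$, so that $\gamma = \left\lfloor \frac{1}{2}\alpha \right\rfloor$ and $\delta = \left\lfloor \frac{1}{2}\beta \right\rfloor$ for some $\alpha, \beta \in \Lambda$. The key elementary fact is the coordinate-wise subadditivity of the floor, $\lfloor a \rfloor + \lfloor b \rfloor \le \lfloor a + b \rfloor$, which yields
\begin{align*}
  \gamma^{(j)} + \delta^{(j)} = \left\lfloor \tfrac{1}{2}\alpha^{(j)} \right\rfloor + \left\lfloor \tfrac{1}{2}\beta^{(j)} \right\rfloor \le \left\lfloor \tfrac{1}{2}\alpha^{(j)} + \tfrac{1}{2}\beta^{(j)} \right\rfloor,
\end{align*}
so that $\gamma + \delta \le \left\lfloor \frac{1}{2}\alpha + \frac{1}{2}\beta \right\rfloor$ in the partial ordering. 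Convexity of $\Lambda$ (taking $p = \frac{1}{2}$) places $\left\lfloor \frac{1}{2}\alpha + \frac{1}{2}\beta \right\rfloor$ in $\Lambda$, and downward-closedness then pulls $\gamma + \delta$ into $\Lambda$ as well. This is the step where both hypotheses on $\Lambda$ are genuinely used, and I expect it to be the main obstacle: one must resist writing $\gamma + \delta = \left\lfloor \frac{1}{2}(\alpha + \beta) \right\rfloor$, since the floor does not distribute over the sum, and instead route through the inequality together with downward closure.

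Second I would show that every half-set $\Theta'$ satisfies $\Theta' \subseteq \Theta$. If $\gamma \in \Theta'$, then $2\gamma = \gamma + \gamma \in \Theta' + \Theta' \subseteq \Lambda$, so $2\gamma \in \Lambda$; since $\gamma$ is integer-valued, $\gamma = \left\lfloor \frac{1}{2}(2\gamma) \right\rfloor \in \left\lfloor \frac{1}{2}\Lambda \right\rfloor = \Theta$. Notice this direction uses neither convexity nor downward-closedness, only the definition of a half-set. Combining the two containments finishes the proof exactly as sketched in the first paragraph.

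As a consistency check that I would note in passing, the same bookkeeping $2\gamma \le \alpha$ shows that $\Theta$ is itself downward-closed and that it contains every half-set, which is precisely what one expects of the \emph{largest} half-set; this reinforces that identifying $\Theta$ with the inclusion-maximal half-set is the right framing, and that the cardinality-maximal half-set of the definition coincides with it.
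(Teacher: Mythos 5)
Your proposal is correct and follows essentially the same route as the paper: both halves of the argument (every half-set lies in $\left\lfloor \frac{1}{2}\Lambda \right\rfloor$, and $\left\lfloor \frac{1}{2}\Lambda \right\rfloor$ is itself a half-set) match the paper's proof. The only cosmetic difference is in verifying the half-set property: the paper first uses downward-closedness to place $2\theta_1, 2\theta_2$ in $\Lambda$ and then applies convexity to obtain $\theta_1+\theta_2$ exactly, whereas you apply convexity to the original representatives $\alpha,\beta$ and then invoke floor subadditivity together with downward closure---both are valid.
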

\begin{proof}
  Let $\Theta$ be any half-set for $\Lambda$. Then for any $\theta \in \Theta$, we have $2 \theta \in \Lambda$, so that $\theta \in \left\lfloor \frac{1}{2} \Lambda \right\rfloor$. Thus $\Theta \subseteq \left\lfloor \frac{1}{2} \Lambda \right\rfloor$, showing that any half-set must be contained in $\left\lfloor \frac{1}{2} \Lambda \right\rfloor$.

  Now let $\theta_1, \theta_2 \in \left\lfloor \frac{1}{2} \Lambda  \right\rfloor$. Then $2 \theta_1, 2\theta_2 \in \widebar{\Lambda} = \Lambda$.  Thus, by convexity
  \begin{align*}
    \theta_1 + \theta_2 &= \frac{1}{2} (2 \theta_1) + \frac{1}{2} (2 \theta_2) \in \Lambda,
  \end{align*}
  where the set inclusion holds by convexity of $\Lambda$. Thus, $\left\lfloor \frac{1}{2} \Lambda  \right\rfloor$ is itself a half-set; by the previous observation that it also dominates any half-set, then it must be the unique largest (maximal) half-set.
\end{proof}

We can now state one of the main results of this section: The number $L(\Lambda)$ in \eqref{eq:L-definition} is a lower bound on the size of any quadrature rule satisfying \eqref{eq:quadrature-condition}.
\begin{theorem}\label{lemma:N-condition}
  Let $\Lambda$ be a finite downward-closed set, and suppose that an $N$-point quadrature rule $\left\{x_j, w_j \right\}_{n=1}^N$ exists satisfying \eqref{eq:quadrature-condition}. Then $N \geq L(\Lambda)$, with $L$ the maximal half-set size defined in \eqref{eq:L-definition}.
\end{theorem}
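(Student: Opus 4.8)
The plan is to turn the combinatorial half-set condition into a statement about matrix rank. The key observation is that the defining property $\Theta + \Theta \subseteq \Lambda$ of a half-set is precisely what forces products of half-set monomials to remain in $P_\Lambda$, so that on the subspace $P_\Theta$ the discrete bilinear form defined by the quadrature rule coincides exactly with the true $L^2_\mu$ inner product. Since the latter is nondegenerate but the former is built from only $N$ evaluation points, a rank count will deliver $L \leq N$.

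First I would fix a maximal half-set $\Theta$ for $\Lambda$, so that $|\Theta| = L$ as in \eqref{eq:L-definition} and $\Theta + \Theta \subseteq \Lambda$. For any $\alpha, \beta \in \Theta$ the product $x^\alpha x^\beta = x^{\alpha+\beta}$ is a monomial indexed by $\alpha + \beta \in \Theta + \Theta \subseteq \Lambda$, hence belongs to $P_\Lambda$. Applying the exactness condition \eqref{eq:quadrature-condition} to each such product then yields
\[
  \sum_{j=1}^N w_j\, x_j^{\alpha}\, x_j^{\beta} = \int_D x^{\alpha+\beta} \dx{\mu(x)} = \left\langle x^\alpha, x^\beta \right\rangle_\mu, \qquad \alpha, \beta \in \Theta.
\]

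Next I would repackage these identities as a factorization. Let $G \in \reals^{L \times L}$ be the Gram matrix $G_{\alpha\beta} = \left\langle x^\alpha, x^\beta \right\rangle_\mu$ indexed over $\alpha, \beta \in \Theta$, let $V \in \reals^{N \times L}$ be the evaluation matrix $V_{j,\alpha} = x_j^\alpha$, and let $W = \mathrm{diag}(w_1, \ldots, w_N)$. The displayed identities say exactly that $G = V^{\top} W V$. Because the monomials $\{x^\theta : \theta \in \Theta\}$ are linearly independent in $L^2_\mu(D)$ under assumption \eqref{eq:mu-assumption}, $G$ is the Gram matrix of $L$ linearly independent functions and is therefore symmetric positive definite, so $\mathrm{rank}(G) = L$. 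On the other hand $\mathrm{rank}(V^{\top} W V) \leq \mathrm{rank}(V) \leq N$, since $V$ has only $N$ rows. Combining the two gives $L = \mathrm{rank}(G) \leq N$, which is the claim.

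The argument is short, and its single load-bearing step is the one I expect to be the main obstacle to state cleanly: that the half-set hypothesis is exactly what makes the discrete form $\sum_j w_j p(x_j) q(x_j)$ reproduce the exact inner product on all of $P_\Theta$ (not merely on $P_\Lambda$ itself). Everything after that is linear algebra. I would also emphasize two points of care. The conclusion is insensitive to the sign of the weights, since $\mathrm{rank}(V^{\top} W V) \leq \mathrm{rank}(V)$ holds for any diagonal $W$; thus positivity of the $w_j$ is never used. And the only hypothesis that genuinely does work is \eqref{eq:mu-assumption}: it is what prevents $G$ from being rank-deficient and hence is what makes $\mathrm{rank}(G) = L$ a meaningful lower bound at all.
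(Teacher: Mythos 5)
Your proof is correct and follows essentially the same route as the paper: both arguments use the half-set property to place all pairwise products of a basis for $P_\Theta$ inside $P_\Lambda$, identify the resulting discrete Gram matrix with the exact one via the factorization $V^{\top} W V$, and conclude by a rank count. The only difference is cosmetic --- the paper works with a $\mu$-orthonormal basis so that the Gram matrix is the identity, whereas you use the monomial basis and invoke positive-definiteness of the moment matrix under \eqref{eq:mu-assumption}; both yield rank $L$ on the left and rank at most $N$ on the right.
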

\begin{proof}
  Let $\Theta$ be any set satisfying $\Theta + \Theta \subseteq \Lambda$ and $|\Theta| = L$, with $L$ defined in \eqref{eq:L-definition}. We choose any size-$L$ $\mu$-orthonormal basis for $P_\Theta$:
  \begin{align*}
    P_\Theta &= \mathrm{span} \left\{ q_j\right\}_{j=1}^L, & \int q_j(x) q_k(x) \dx{\mu(x)} &= \delta_{k,j}
  \end{align*}
  Note that $q_j$ and $q_k$ have monomial expansions
  \begin{align*}
    q_j(x) &= \sum_{\alpha \in \Theta} c_\alpha x^\alpha, &
    q_k(x) &= \sum_{\alpha \in \Theta} d_\alpha x^\alpha, 
  \end{align*}
  for fixed $j,k=1, \ldots, N$ and some constants $c_\alpha$ and $d_\alpha$. This implies
  \begin{align*}
    q_j(x) q_k(x) = \sum_{\alpha, \beta \in \Theta} c_\alpha d_\beta x^{\alpha + \beta} = \sum_{\alpha \in (\Theta + \Theta)} f_\alpha x^\alpha.
  \end{align*}
  Since $\Theta$ is a half set for $\Lambda$, then $q_j q_k \in P_\Lambda$ and is therefore exactly integrated by the quadrature rule \eqref{eq:quadrature-condition}.

  Then consider the $L \times L$ matrix $\bs{G}$ defined as
  \begin{align*}
    \bs{G} &= \bs{V}^T \bs{W} \bs{V}, & (G)_{j,k} &= \sum_{n=1}^N w_n q_j(x_n) q_k(x_n) \\
    (V)_{j,k} &= q_k(x_j), & (W)_{j,k} &= w_j \delta_{j,k}.
  \end{align*}
  The matrices $\bs{V}$ and $\bs{W}$ are $N \times L$ and $N \times N$, respectively. Since the quadrature rule exactly integrates $q_j q_k$, then $\bs{G} = \bs{I}$, the $L \times L$ identity matrix. Thus, the matrix product $\bs{V}^T \bs{W} \bs{V}$ has rank $L$. If $N < L$ then, e.g., $\mathrm{rank}(\bs{W}) < L$ and so it is not possible that $\mathrm{rank} \left(\bs{V}^T \bs{W} \bs{V}\right) = L$.
\end{proof}
This result shows that if $N < L(\Lambda)$, then an $N$-point quadrature rule satisfying \eqref{eq:quadrature-condition} cannot exist. This is nontrivial information. As an example, let $d=2$, and consider
\begin{align*}
\Lambda = B_{\infty}(2) = \left\{ (0,0), (0,1), (0,2), (1,0), (1,1), (1,2), (2,0), (2,1), (2,2) \right\},
\end{align*}
corresponding to the tensor-product space of degree 2. Since $|\Lambda| = 9$, the heuristic \eqref{eq:counting-heuristic} suggests that it is possible to find a rule with only 3 points. However, let $\Theta = \left\{ (0,0), (0,1), (1,0), (1,1) \right\}$. Then $\Theta + \Theta = \Lambda$ and $|\Theta| = 4$. Thus, no 3-point rule that is accurate on $P_\Lambda$ can exist.

Another observation from Lemma \ref{lemma:N-condition} is that we may justifiably call an $N$-point quadrature rule optimal if $N = L$, in the sense that one cannot achieve the same accuracy with a smaller number of nodes.
\begin{definition}
  Let $\Lambda$ be a downward-closed multi-index set. We call an $M$-point quadrature rule \textit{quasi-optimal} for $\Lambda$ if it satisfies \eqref{eq:quadrature-condition} with $M=L(\Lambda)$, with $L$ defined in \eqref{eq:L-definition}.
\end{definition}
We call these sets \underline{quasi}-optimal because, given a quasi-optimal rule for $\Lambda$, it may be possible to generate a quadrature rule of equal size that is accurate on an index set that is strictly larger than $\Lambda$ (see Section \ref{sec:poly-spaces}). Quasi-optimal quadrature rules are not necessarily unique and sometimes do not exist. However, the weights for quasi-optimal quadrature rules have a precise behavior. 

Under the assumption \eqref{eq:mu-assumption}, we can find an orthonormal basis $q_j$ for $P_\Lambda$:
\begin{align*}
  \left\langle q_j, q_k \right\rangle_\mu &= \delta_{j,k}, & P_\Lambda = \mathrm{span} \left\{ q_j \right\}_{j=1}^N
\end{align*}
Given this orthonormal basis, define
\begin{align*}
  \lambda_\Lambda(x) &= \frac{1}{\sum_{j=1}^N q_j^2(x)}.
\end{align*}
The quantity $\lambda_\Lambda$ depends on $D$, $\mu$, and $P_\Lambda$, but not on the particular basis $q_j$ we have chosen: Through a unitary transform we may map $\left\{ q_j \right\}_{j=1}^N$ into any other orthonormal basis for $P_\Lambda$, but this transformation leaves the quantity above unchanged. The weights for a quasi-optimal quadrature rule on $\Lambda$ are evaluations of $\lambda_\Theta$, where $\Theta$ is a maximal half-set for $\Lambda$.
\begin{theorem}\label{lemma:w-condition}
  Let $\Lambda$ be a finite downward-closed set, and let an $M$-point quadrature rule $\left\{x_m, w_m \right\}_{m=1}^M$ be quasi-optimal for $\Lambda$. Then
  \begin{align}\label{eq:w-condition}
    w_j &= \lambda_\Theta(x_j), & j&=1, \ldots, N,
  \end{align}
  where $\Theta$ is a(ny) maximal half set for $\Lambda$.
\end{theorem}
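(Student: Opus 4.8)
The plan is to reuse the matrix machinery from the proof of Theorem~\ref{lemma:N-condition}, exploiting the fact that in the quasi-optimal regime the relevant matrix becomes square and hence invertible. First I would fix any maximal half-set $\Theta$ for $\Lambda$, so that $|\Theta| = L(\Lambda) = M$, and select a $\mu$-orthonormal basis $\{q_l\}_{l=1}^L$ of $P_\Theta$. Exactly as before, since $\Theta + \Theta \subseteq \Lambda$ each product $q_j q_k$ lies in $P_\Lambda$ and is therefore integrated exactly by the rule. Writing $(V)_{j,k} = q_k(x_j)$ and $\bs{W} = \mathrm{diag}(w_1, \ldots, w_M)$, the exactness of all these products is precisely the statement $\bs{V}^T \bs{W} \bs{V} = \bs{I}$. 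The key new feature relative to Theorem~\ref{lemma:N-condition} is that, because $M = L$, the matrix $\bs{V}$ is now $M \times L = L \times L$, i.e.\ square.

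Next I would extract invertibility and then invert. Taking determinants in $\bs{V}^T \bs{W} \bs{V} = \bs{I}$ gives $\det(\bs{V})^2 \det(\bs{W}) = 1$, so $\det \bs{V} \neq 0$ and $\det \bs{W} = \prod_m w_m \neq 0$; in particular every weight is nonzero. Since $\bs{V}$ is square and invertible, inverting the identity yields $\bs{W}^{-1} = \bs{V} \bs{V}^T$. Finally I would read off the diagonal entries: because $(\bs{V} \bs{V}^T)_{j,j} = \sum_{l=1}^L q_l^2(x_j)$, the diagonal of $\bs{W}^{-1} = \bs{V}\bs{V}^T$ gives $1/w_j = \sum_{l=1}^L q_l^2(x_j) = 1/\lambda_\Theta(x_j)$, which is the claimed identity \eqref{eq:w-condition}. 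Since $\lambda_\Theta$ is independent of the chosen orthonormal basis (as noted immediately before the statement) and $\Theta$ was an arbitrary maximal half-set, the formula holds for every such $\Theta$.

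I expect no serious obstacle here: essentially all the content is the observation that $M = L$ forces $\bs{V}$ to be square, after which the inversion and the reading-off of the diagonal are immediate. The only points meriting care are (i) confirming that the full rank of $\bs{G} = \bs{I}$ propagates to invertibility of each square factor, which the determinant identity handles cleanly, and (ii) noting that the off-diagonal relations $(\bs{V}\bs{V}^T)_{j,k} = 0$ for $j \neq k$ fall out for free but are not needed for the weight formula. A minor bookkeeping remark is that the index range in \eqref{eq:w-condition} should read $j = 1, \ldots, M$ with $M = L(\Lambda)$; as a pleasant byproduct, the derived formula $w_j = \lambda_\Theta(x_j)$ shows the weights are automatically positive.
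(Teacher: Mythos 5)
Your proposal is correct and follows the paper's strategy in its essentials: both proofs hinge on the observation that quasi-optimality makes $\bs{V}$ square, turn the exactness of the products $q_j q_k$ into the identity $\bs{V}^T \bs{W} \bs{V} = \bs{I}$, invert to obtain $\bs{V}\bs{V}^T = \bs{W}^{-1}$, and read the weight formula off the diagonal. The one genuine difference is how invertibility is secured. The paper first proves positivity of the weights by a separate argument with cardinal Lagrange interpolants from $P_\Theta$ (namely $w_j = \sum_k w_k \ell_j^2(x_k) = \int \ell_j^2 \, d\mu > 0$), which legitimizes $\sqrt{\bs{W}}$ and lets it conclude that $\sqrt{\bs{W}}\bs{V}$ is an orthogonal matrix before inverting. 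You instead take determinants in $\bs{V}^T\bs{W}\bs{V} = \bs{I}$ to get $\det(\bs{V})^2 \det(\bs{W}) = 1$, which yields invertibility of both factors directly and delivers positivity as a corollary, since the diagonal entries $1/w_j = \sum_l q_l^2(x_j)$ are nonnegative sums of squares and nonzero. Your route is slightly more economical and avoids introducing the interpolants at all; the paper's route has the side benefit of exhibiting the weights as integrals of squared cardinal functions, a classically meaningful representation. Your remark that the index range in \eqref{eq:w-condition} should read $j = 1, \ldots, M$ with $M = L(\Lambda)$ correctly identifies a typo in the statement.
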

\begin{proof}
  Let $\ell_j(x)$, $j=1, \ldots, N$, be the cardinal Lagrange interpolants from the space $P_{\Theta}$ on the nodes $\left\{x_j\right\}_{j=1}^N$.\footnote{The interpolation problem of $x_j$ for any basis of $P_\Theta$ must be invertible since the Vandermonde-like matrix $\bs{V}$ in the proof of Lemma \ref{lemma:N-condition} must have full rank, be square since $N=L$, and thus be invertible. Thus, the $\ell_j$ are well-defined.} These cardinal interpolants satisfy $\ell_j(x_k) = \delta_{j,k}$. The weights $w_j$ must be positive since
  \begin{align}\label{eq:w-positive}
    w_j = \sum_{k=1}^N w_k \ell_j^2(x_k) = \int \ell_j^2(x) \dx{\mu(x)} > 0,
  \end{align}
  where the second equality uses the fact that the quadrature rule is exact on $P_\Lambda \ni \ell_j^2$. By \eqref{eq:w-positive}, then $\sqrt{\bs{W}}$ is well-defined. With $\bs{V}$ and $\bs{W}$ defined in the proof of Lemma \ref{lemma:N-condition}, then $\bs{V}^T \bs{W} \bs{V} = \bs{I}$, and both matrices are square ($N = L$). Thus, $\sqrt{\bs{W}} \bs{V}$ is an orthogonal matrix, so that
  \begin{align*}
    \sqrt{\bs{W}} \bs{V} \bs{V}^T \sqrt{\bs{W}^T} &= \bs{I} \\
    \bs{V} \bs{V}^T &= \bs{W}^{-1}
  \end{align*}
  Taking the diagonal components of the left- and right-hand side shows that $w_j = \left( \sum_{k=1}^N (V)^2_{j,k} \right)^{-1}$.
\end{proof}

\section{Theoretical counter/examples of quasi-optimal quadrature}\label{sec:conseequences}
To provide further insight into our definition of quasi-optimality, we
investigate the consequences of the above theoretical results through some examples.

\subsection{Univariate rules}\label{sec:univariate}
The example in this section shows (i) that optimal quadarture rules in one dimension are Gauss quadrature rules, and (ii) among many quasi-optimal rules, some may be capable of accurately integrating more polynomials than the others.

Let $d = 1$ with $\Lambda = \left\{ 0, \ldots, N-1\right\}$. The maximal half-set for $\Lambda$ is $\Theta = \left\{ 0, \ldots, \left\lfloor\frac{N-1}{2} \right\rfloor\right\}$, and thus 
\begin{align*}
  L(\Lambda) = \left\lfloor \frac{N-1}{2} \right\rfloor + 1
\end{align*}
We consider two cases, that of even $N$, and of odd $N$.

Suppose $N$ is even. Then a quasi-optimal quadrature rule has $M = |\Theta| = \frac{N}{2}$ abscissae, and exactly integrates $M = 2N$ (linearly independent) polynomials. I.e., the $N$-point rule exactly integrates polynomials up to degree $2 M - 1$. It is well-known that this quadrature rule is unique: it is the $\mu$-Gauss-Christoffel quadrature rule \cite{szego_orthogonal_1975}. 

With $\left\{q_j\right\}_{j=0}$ a family of $L^2_\mu(D)$-orthonormal polynomials, with $\deg q_j = j$, then the abscissae $x_1, \ldots, x_M$ are the zeros of $q_M$, and the weights are given by \eqref{eq:w-condition}. This quadrature rule can be efficiently computed from eigenvalues and eigenvectors of the symmetric tridiagonal Jacobi matrix associated with $\mu$ \cite{golub_calculation_1969}. Thus, this quadrature rule can be computed without resorting to optimization routines.

Now suppose $N$ is odd. Then a quasi-optimal quadrature rule has $M = L(\Lambda) = \frac{N+1}{2}$ abscissae, and exactly integrates $2 M - 1$ polynomials. I.e., the $M$-point rule exactly integrates polynomials up to degree $2 M - 2$. Clearly a Gauss-Christoffel rule is a quasi-optimal rule in this case. However, by our definition there are an (uncountably) \textit{infinite} number of quasi-optimal rules in this case \cite{szego_orthogonal_1975}. In particular, for arbitrary $c \in \R$, the zero set of the polynomial
\begin{align}\label{eq:gauss-radau}
  q_M(x) - c q_{M-1}(x),
\end{align}
corresponds to the abscissae of a quasi-optimal quadrature rule, with the weights give by \eqref{eq:w-condition}. Like the Gauss-Christoffel case, computation of these quadrature rules can be accomplished via eigendecompositions of symmetric tridiagonal matrices \cite{narayan2017}.

In classical scientific computing scenarios, sets of $N$-point rules with polynomial accuracy of degree $2 N -2$ have been called Gauss-Radau rules, and are traditionally associated to situations when $\supp \mu$ is an interval and one of the quadrature absciasse is collocated at one of the endpoints of this interval \cite{golub_modified_1973}.


\subsection{Quasi-optimal multivariate quadrature}\label{sec:additive-set-example}
This section furnishes an example where a quasi-optimal multivariate quadrature rule can be explicitly constructed. 

Consider a tensorial domain and probability measure, i.e., suppose
\begin{align*}
  \mu &= \times_{j=1}^d \mu_j, & D &= \otimes_{j=1}^d \supp \mu_j
\end{align*}
where $\mu_j$ are univariate probability measures. We let $\Lambda = B_{0}(1) \cap B_{1}(n)$ for any $n \geq 2$. With $e_j$ the cardinal $j$'th unit vector in $\N_0^d$, then $\Lambda$ is the set of indices of of the form $q e_j$ for $0 \leq q \leq n$; the size of $\Lambda$ is $N = d n + 1$. Note that this $\Lambda$ would arise in situations where one seeks to approximate an unknown multivariate function as a sum of univariate functions; this is rarely a reasonable assumption in practice. However, in this case we can explicitly construct optimal quadrature rules.

The heuristic \eqref{eq:counting-heuristic} suggests that we can construct a rule satisfying \eqref{eq:quadrature-condition} if we use
\begin{align*}
  M \geq 
  n \left(\frac{d}{d+1}\right) + \frac{1}{d+1}
\end{align*}
nodes, which is approximately $n$ nodes. However, we can achieve this with fewer nodes, only $M = \left\lfloor n/2 \right\rfloor + 1$, independent of $d$. Here the heuristic \eqref{eq:counting-heuristic} is too pessimistic when $d \geq 2$. 

Associated to each $\mu_j$, we need the corresponding system of orthonormal polynomials $q$ and the univariate $\lambda$ function. For each $j=1, \ldots, d$, let $q_{n,j}(x)$, $n=0, 1, \ldots,$ denote a $L^2_{\mu_j}$-orthonormal polynomial family with $\deg q_{n,j} = n$. Define 
\begin{align*}
  \lambda_{n,j}\left(\cdot \right) = \frac{1}{\sum_{i=0}^{n} q_{i,j}^2(\cdot)}.
\end{align*}
Note that $\lambda_{0,j} = 1/q_{0,j}^2 \equiv 1$ for all $j$ since each $\mu_j$ is a probability measure.


Consider the index sets $\Theta_j = \left\{ 0, e_j, 2e_j, \ldots, \left\lfloor n/2 \right\rfloor  e_j \right\}$, for $j = 1, \ldots, d$, where $0$ is the origin in $\N_0^d$. Each $\Theta_j$ is a maximal half-set for $\Lambda$, and $L$ in \eqref{eq:L-definition} is given by $L = \left\lfloor n/2 \right\rfloor+1$.


To construct a quasi-optimal quadrature rule achieving with $M = L$ nodes, we note that the weights $w_j$, $j=1, \ldots, N$ must be given by \eqref{eq:w-condition}, which holds for \textit{any} maximal half index set $\Theta$. I.e., it must simultaneously hold for \textit{all} $\Theta_j$. Thus, 
\begin{align}\label{eq:christoffel-equivalence}
  w_m = \lambda_{{\Theta_j}}\left(x_m\right) = \left[ \sum_{i=0}^{n} q_{i,j}^2\left(x_m^{(j)}\right) \prod_{\substack{s = 1, \ldots, d\\s \neq j}} q^2_{0,s} \right]^{-1} = \lambda_{\left\lfloor n/2 \right\rfloor,j} \left( x_m^{(j)}\right),
\end{align}
for $j = 1, \ldots, d$. This implies in particular that the coordinates $x_m^{(j)}$ for node $m$ must satisfy
\begin{align*}
 \lambda_{\left\lfloor n/2 \right\rfloor,1} \left( x_m^{(1)}\right) = 
 \lambda_{\left\lfloor n/2 \right\rfloor,2} \left( x_m^{(2)}\right) = \cdots 
 = \lambda_{\left\lfloor n/2 \right\rfloor,d} \left( x_m^{(d)}\right).
\end{align*}
We can satisfy this condition in certain cases. Suppose $\mu_1 = \mu_2 = \cdots = \mu_d$; then $\lambda_{n,1} = \lambda_{n,2} = \cdots = \lambda_{n,d}$ and so we can satisfy \eqref{eq:christoffel-equivalence} by setting $x^{(1)}_m = x^{(2)}_m = \cdots = x^{(d)}_m$ for all $m = 1, \ldots, M$. Thus nodes for a quasi-optimal quadrature rule nodes could lie in $\R^d$ along the graph of the line defined by
\begin{align*}
  x^{(1)} = x^{(2)} = \cdots = x^{(d)}.
\end{align*}
In order to satisfy the integration condition \eqref{eq:quadrature-condition} we need distribute the nodes in an appropriate way. Having effectively reduced the problem to one dimension, this is easily done: we choose a Gauss-type quadrature rule as described in the previous section. Let the $j$th coordinate of the quadrature rule be the $M$-point Gauss quadrature nodes for $\mu_j$, i.e.,
\begin{align*}
  \left\{ x_1^{(j)}, x_2^{(j)}, \ldots, x_M^{(j)} \right\} = q_{M,j}^{-1}(0).
\end{align*}
This then uniquely defines $x_1, \ldots, x_M$, and $w_m$ is likely uniquely defined since we have satisfied \eqref{eq:christoffel-equivalence}.

Thus a ``diagonal" Gauss quadrature rule, $M = \left\lfloor n/2 \right\rfloor + 1$, with equal coordinate values for each abscissa, is an optimal rule in this case. 

\subsection{Quasi-optimal multivariate quadrature: non-uniqeness}
The previous example with an additional assumption allows to construct $2^{d-1}$ distinct quasi-optimal quadrature rules. Again take $\Lambda = B_{0}(1) \cap B_{1}(n)$ for $n\geq 2$, and let $\mu = \times_{j=1}^d \mu_j$ with identical univariate measures $\mu_j = \mu_k$. 

To this add the assumption that $\mu_j$ is a symmetric measure; i.e., for any set $A \subset \R$, then $\mu_j(A) - \mu_j(-A)$. In this case the univariate orthogonal polynomials $q_{k,j}$ are even (odd) functions if $k$ is even (odd). Thus, the set zero set $q_{k,j}^{-1}(0)$ is symmetric around 0, and $\lambda_{M,j}$ is always an even function. With $x_m$ the quasi-optimal set of nodes defined in the previous section, let 
\begin{align*}
  y_m^{(j)} &= \sigma_j x_m^{(j)}, & \sigma_j \in \left\{ -1, +1 \right\},
\end{align*}
for a fixed but arbitrary sign train $\sigma_1, \ldots, \sigma_d$. Using the above properties, one can show that the nodes $\left\{y_1, \ldots, y_M\right\}$ and the weights $w_1, \ldots, w_M$ define a quadrature rule satisfying \eqref{eq:quadrature-condition}, and of course have the same number of nodes as the quasi-optimal rule from the previous section.

By varying the $\sigma_j$, we can create $2^{d-1}$ unique distributions of nodes, thus showing that at least this many quasi-optimal rules exist.

\subsection{Quasi-optimal multivariate quadrature: non-existence}
We again use the setup of Section \ref{sec:additive-set-example}, but this time to illustrate that it is possible for quasi-optimal to not exist. We consider $d=2$, and take $\Lambda = B_{0}(1) \cap B_{1}(3)$, and let $\mu = \mu_1 \times \mu_2$ for two univariate probability measures $\mu_1$ and $\mu_2$. However, this time we let these measures be different:
\begin{align*}
  \dx{\mu_1}(t) &= \frac{1}{2} \dx{t}, & \mathrm{supp}\;\mu_1 &= [-1,1] \\
  \dx{\mu_2}(t) &= \frac{3}{4} (1-t^2) \dx{t}, & \mathrm{supp}\;\mu_2 &= [-1,1]
\end{align*}
With our choice of $\Lambda$, we have $L(\Lambda) = 2$ with two maximal half-sets:
\begin{align*}
  \Theta_1 &= \left\{ (0,0), (1,0) \right\}, & \Theta_2 &= \left\{ (0,0), (0,1) \right\}
\end{align*}
In this simple case the explicit $\mu_1$- and $\mu_2$-orthonormal families take the expressions
\begin{align*}
  q_{0,1}(t) &= 1, & q_{1,1}(t) &= \sqrt{3} t, \\
  q_{0,2}(t) &= 1, & q_{1,2}(t) &= \frac{\sqrt{5}}{2} t
\end{align*}
so that associated to $\Theta_1$ and $\Theta_2$, respectively, we have the functions
\begin{align*}
  \lambda_{1,1}(t) &= \frac{1}{1 + 3 t^2}, & 
  \lambda_{1,2}(t) &= \frac{4}{4 + 5 t^2}.
\end{align*}
Since by \eqref{eq:christoffel-equivalence} we require $w_m = \lambda_{1,1}\left(x_m^{(1)}\right) = \lambda_{1,2}\left(x_m^{(2)}\right)$, this implies that 
\begin{subequations}
\begin{align}\label{eq:noexist-a}
  3 \left( x_m^{(1)}\right)^2 &= \frac{5}{4} \left( x_m^{(2)} \right)^2, & m = 1, 2
\end{align}
However, the condition \eqref{eq:quadrature-condition} also implies for our choice of $\Lambda$ that
\begin{align*}
  \int_{-1}^2 p(t) \frac{1}{2} \dx{t} &= \sum_{m=1}^2 w_m p\left(x^{(1)}_m\right), & p &\in \mathrm{span} \left\{ 1, t, t^2, t^3 \right\}, \\
  \int_{\R} p(t) \frac{3}{4}(1-t^2)  \dx{t} &= \sum_{m=1}^2 w_m p\left(x^{(2)}_m\right), & p &\in \mathrm{span} \left\{ 1, t, t^2, t^3 \right\}.
\end{align*}
The conditions above imply that $\left\{x^{(1)}_m\right\}_{m=1}^2$ and $\left\{x^{(2)}_m\right\}_{m=1}^2$ be nodes for the 2-point $\mu_1$- and $\mu_2$-Gauss quadrature rules, respectively, which are both unique. Thus, $\left\{x^{(1)}_m\right\}_{m=1}^2 = \left\{ \pm \sqrt{3}/3 \right\}$, and $\left\{ x^{(2)}_m\right\}_{m=1}^2 = \left\{ \pm 1/\sqrt{5} \right\}$. Thus, we have the equality
\begin{align}\label{eq:noexist-b}
  3 \left( x_m^{(1)}\right)^2 &= 5 \left( x_m^{(2)} \right)^2, & m = 1, 2
\end{align}
\end{subequations}
We arrive at a contradiction: simultaneous satisfaction of \eqref{eq:noexist-a} and \eqref{eq:noexist-b} implies all coordinates of the abscissae are 0, which cannot satisfy \eqref{eq:quadrature-condition}. Thus, no quasi-optimal quadrature rule for this example exists.

\section{Numerically generating multivariate quadrature rules}
\label{sec:algorithm}
In this section we describe our proposed algorithm for generating multivariate quadrature rules; the algorithm generates rules having significantly less points than the number of moments being matched. We will refer to such rules as \textit{reduced quadrature} rules. We will compare the number of nodes our rules can generate with the lower bound $L(\Lambda)$ defined in Section \ref{sec:optimal-quadrature}, along with the heuristic bound \eqref{eq:counting-heuristic}.

Our method is an adaptation of the method presented in \cite{ryu_extensions_2014}. The authors there showed that one can recover a univariate Gauss quadrature rule as the solution to an infinite-dimensional linear program (LP) over nonnegative measures.  Let a finite index set $\Lambda$ be given with $|\Lambda| = {N}$, and suppose that $\left\{ p_j \right\}_{j=1}^{N}$ is any basis for $P_\Lambda$. In addition, let $r$ be a polynomial on $\R^d$ such that $r \not\in P_\Lambda$; we seek a positive Borel measure $\nu$ on $\R^d$ solving
\begin{align}\label{eq:inifite-lp}
  \textrm{minimize } &\int r(\V{x}) \dx{\nu(\V{x})} \\
  \textrm{subject to } &\int p_j(\V{x}) \dx{\nu(\V{x})} = \int p_j(\V{x}) \dx{\mu(\V{x})}, \hskip 5pt j=1, \ldots, {N_{\Lambda}}
\end{align}
The restriction that $\nu$ is a positive measure will enter as a constraint into the optimization problem. A nontrivial result is that a positive measure solution to this problem exists with $|\supp \nu| = M \leq {N}$. Such a solution immediately yields a positive quadrature rule with nodes $\left\{\V{x}_{1}, \ldots, \V{x}_{M} \right\} = \supp \nu$ and weights given by $w_j = \nu(\V{x}_{j})$. Unfortunately, the above optimization problem is NP-hard, and so the authors in \cite{ryu_extensions_2014} propose a two step procedure for computing an approximate solution. The first step solves a finite-dimensional linear problem similar to~\eqref{eq:inifite-lp} to find a $K$-point positive quadrature rule with $K \leq N$. In the second step, the $K\leq{N}$ points are clustered into $M$ groups, where $M$ is automatically chosen by the algorithm. The clusters are then used to form a set of $M$ averaged points and weights, which form an approximate quadrature rule. This approximate quadrature rule is refined using a local gradient-based method to optimize a moment-matching objective. The precise algorithm is outlined in detail in Appendix~\ref{app:lp-algorithm}.

In this paper we also adopt a similar two step procedure to compute reduced quadrature rules. We outline these two steps in detail in the following section. Pseudo code for generating reduced quadrature rules is presented in Algorithm~\ref{alg:reduced-quadrature} contained in Appendix~\ref{app:algorithms}. 

\subsection{Generating an initial condition}\label{sec:algorithm-initial}
Let an index set $\Lambda$ be given with $|\Lambda| = {N} < \infty$, and suppose that $\left\{ p_j \right\}_{j=1}^n$ is a basis for $P_\Lambda$.
We seek to find a discrete measure $\nu=\sum_k v_k\delta_{\V{x}^{(k)}}$ by choosing a large candidates mesh, $X_S = \left\{ x_k \right\}_{k=1}^S$, with $S \gg N$ points on $D$ and solving the $\ell_1$ minimization problem
    \begin{align}\label{eq:l1-problem}
\begin{split}
      \textrm{minimize } &\sum_{k=1}^S |v_k| \\
      \textrm{subject to } &\sum_{k=1}^S v_k p_j({x}_k) = \int p_j({x}) \dx{\mu({x})}, \hskip 5pt j=1, \ldots, {N_{\Lambda}} \\
      \textrm{and } & v_k \geq 0, \hskip 5pt k=1, \ldots, S
\end{split}
    \end{align}
    The optimization is over the $S$ scalars $v_k$. With $\bs{v} \in \R^S$ a vector containing the $v_k$, the non-zero coefficients then define a quadrature rule with $K=\left\|\bs{v}\right\|_0$ points, $K \leq N$. The points corresponding to the non-zero coefficients $v_k$ are the quadrature points and the coefficients themselves are the weights.

This $\ell_1$-minimization problem as well as the residual based linear program used by~\cite{ryu_extensions_2014} become increasingly difficult to solve as the size of $\Lambda$ and dimension $d$ of the space increase. The ability to find a solution is highly sensitive to the candidate mesh. Through extensive experimentation we found that by solving~\eqref{eq:l1-problem} approximately via 
    \begin{align}\label{eq:bpdn-problem}
\begin{split}
      \textrm{minimize } &\sum_{k=1}^S |\alpha_k| \\
      \textrm{subject to } &\lvert \sum_{k=1}^S \alpha_k p_j(\V{x}^{(k)}) - \int p_j(\V{x}) \dx{\mu(\V{x})}\rvert<\epsilon, \hskip 5pt j=1, \ldots, n \\
      \textrm{and } & \alpha_k \geq 0, \hskip 5pt k=1, \ldots, M
\end{split}
    \end{align}
    for some $\epsilon >0$, we were able to find solutions that, although possibly inaccurate with respect to the moment matching criterion, could be used as an initial condition for the local optimization to find an accurate reduced quadrature rule. This is discussed further in Section~\ref{sec:tp-measures}. We were not able to find such approximate initial conditions using the linear program used in~\cite{ryu_extensions_2014}; we show results supporting this in Section \ref{sec:numerical-results}.

We solved~\eqref{eq:bpdn-problem} using least angle regression with a
LASSO modification~\cite{Tibshirani_JRSSBM_1996} whilst enforcing 
positivity of the coefficients. This algorithm iteratively adds and
removes positive weights $\alpha_k$ until $\epsilon=0$, or until no new
weights can be added without violating the positivity constraints. This
allows one to drive $\epsilon$ to small values without requiring an \textit{a priori} estimate of $\epsilon$.

In our examples, the candidate mesh ${X}_S$ is selected by generating uniformly random samples over the integration domain $D$, regardless of the integration measure. Better sampling strategies for generating the candidate mesh undoubtedly exist, but these strategies will be $D$- and $\Lambda$-dependent, and are not the central focus of this paper. Our limited experimentation here suggested that there was only marginal benefit from exploring this issue.

\subsection{Finding a reduced quadrature rule}
Once an initial condition has been found by solving ~\eqref{eq:bpdn-problem} we then use a simple greedy clustering algorithm to generate an initial condition for a local gradient-based moment-matching optimization.

\subsubsection{Clustering}
The greedy clustering algorithm finds the point with the smallest weight and combines it with its nearest neighbor. These two points are then replaced by a point whose coordinates are a convex combination of the two clustered points, where the convex weights correspond to the $v_k$ weights that are output from the LP algorithm. The newly clustered point set has one less point than before. This process is repeated. , 

We terminate the clustering iterative procedure when a desired number of points $M$ is reached. At the termination of the clustering algorithm a set of points $\widehat{{x}}_m$ and weights $\widehat{w}_m$, defining an approximate quadrature rule are returned. The algorithm pseudo-code describing this greedy clustering algorithm is given in Algorithm~\ref{alg:cluster} in Appendix~\ref{app:algorithms}.

The ability to find an accurate reduced quadrature rule is dependent on the choice of the number of specified clusters $M$. As stated in Section~\ref{sec:optimal-quadrature}, there is a strong heuristic that couples the dimension $d$, the number of matched moments $N$, and the number of points $M$. For general $\mu$ and given ${N}=|\Lambda|$ we set the number of clusters to be
\begin{align}\label{eq:dof-heuristic}
  M = \frac{N}{d+1}.
\end{align}
As shown in Section~\ref{sec:optimal-quadrature} this heuristic will not always produce a quadrature rule that exactly integrates all specified moments. Moreover, the heuristic may over estimate the actual number of points in a quasi-optimal quadrature rule. 

It is tempting to set $M$ to the lower bound value $L(\Lambda)$, defined in \eqref{eq:L-definition}. However, a sobering result of our experimentationl is that in all our experiments we were never able to numerically find a quadrature rule with fewer points than that specified by \eqref{eq:dof-heuristic}; therefore, we could not find any rules with $M$ points where $L(\Lambda) \leq M < N/(d+1)$. However, we were able to identify situations in which the heuristic underestimated the requisite number of points in a reduced quadrature rule. For example, if one wants to match the moments of a total degree basis of degree $k$ one must use at least $ k/2+d\choose d$ points. This lower bound is typically violated by the heuristic for low-degree $k$ and high-dimension $d$. E.g. for $d=10$ and $k=2$ we have $M=6$ using the heuristic yet the theoretical lower bound for $M$ from Theorem~\ref{lemma:N-condition} requires $M \geq L(\Lambda)=11$. In this case our theoretical analysis sets a lower bound for $M$ that is more informative than the heuristic \eqref{eq:dof-heuristic}.

\subsubsection{Local-optimization}
The approximate quadrature rule $\widehat{{x}}_k$, $\widehat{w}_k$ 
generated by the clustering algorithm is used as an initial guess for the following local optimization
\begin{subequations}
    \begin{align}\label{eq:local-optimization}
      \textrm{minimize } &\sum_{j=1}^{N} \left( \int p_j({x}) \dx{\mu({x})} - \sum_{m=1}^M w_m p_j({x}_{m}) \right)^2 \\\label{eq:local-constraints}
      \textrm{subject to } & {x}_{m} \in D \textrm{ and } w_m \geq 0,
    \end{align}
  \end{subequations}
The objective $f$ defined by \eqref{eq:local-optimization} is a polynomial and its gradient can easily be computed 
\begin{subequations}\label{eq:jacobian}
\begin{align}
  \frac{df}{dx_{k}^{(s)}}&=
    -\sum_{i=0}^{n-1}\left[ w_k\frac{dp_i({x}_{k})}{dx_{k}^{(s)}}
       \left(q(p_i)-\sum_{j=1}^M w_jp_i({x}_{j})\right)\right]\\
\frac{df}{dw_k}&=
      -\sum_{i=0}^{n-1}\left[ p_i({x}_{k})
       \left(q(p_i)-\sum_{j=1}^M w_jp_i({x}_{j})\right)\right],
\end{align}
\end{subequations}
for $s = 1, \ldots, d$ and $m = 1, \ldots, M$, and where $q(p_i) = \int p_i(x) \dx{\mu}(x)$.

We use a gradient-based nonlinear least squares method to solve the local optimization problem. Defining the optimization tolerance $\tau=10^{-10}$, the procedure exits when either $|f_{i}-{f_{i-1}}|<\tau f_i$ or $\lVert g_s\rVert_\infty < \tau$, where $f_i$ and $f_{i-1}$ are the values of the objective at steps $i$ and $i-1$ respectively, and $g_s$ is the value of the gradient scaled to respect the constraints \eqref{eq:local-constraints}.

This local optimization procedure in conjunction with the cluster
based initial guess can frequently find a quadrature rule of size $M$
as determined by the degree of freedom
heuristic~\eqref{eq:dof-heuristic}. However in some cases a quadrature
rule with $M$ points cannot be found for very high accuracy
requirements (in all experiments we say that a quadrature rule is found
found if the iterations yield $\lvert f_i\rvert<10^{-8}$). In these situations one might be able to find an $M$ point rule using another initial condition. (Recall the initial condition provided by $\ell_1$-minimization is found using a randomized candidate mesh.) However we found it more effective to simply increment the size of the desired quadrature rule to $M+1$. This can be done repeatedly until a quadrature rule with the desired accuracy is reached. While one fears that one may increment $M$ by a large number using this procedure, we found that no more than a total of $10$ of increments $(M \rightarrow M + 10)$ were ever needed. This is described in more detail in Section~\ref{sec:tp-measures}.

Note that both generating the initial condition using~\eqref{eq:bpdn-problem} and solving the local optimization problem~\eqref{eq:local-optimization} involve matching moments. We assume in this paper that moments are available and given; in our examples we compute these moments analytically (or to machine precision with high-order quadrature), unless otherwise specified.



\section{Numerical results}
\label{sec:numerical-results}
In this section we will explore the numerical properties of the multivariate quadrature algorithm we have proposed in Section \ref{sec:algorithm}. First we will numerically compare the performance of our algorithm with other popular quadrature strategies for tensor product measures. We will then demonstrate the flexibility our our approach for computing quadrature rules for non-tensor-product measures, for which many existing approaches are not applicable. Finally we will investigate the utility of our reduced quadrature rules for high-dimensional integration by leveraging selected moment matching and dimension reduction. 

Our tests will compare the method in this paper (Section \ref{sec:algorithm}), which we call \textsc{Reduced Quadrature}, to other standard quadrature methods. We summarize these methods below.
\begin{itemize}
  \item \textsc{Monte Carlo} -- The integration rule
    \begin{align*}
      \int_D f(x) \dx{\mu}(x) \approx \frac{1}{M} \sum_{m=1}^M f(X_m),
    \end{align*}
    where $X_m$ are independent and identically-distributed samples from the probability measure $\mu$.
  \item \textsc{Sparse grid} -- The integration rule for $\mu$ the uniform measure over $[-1,1]^d$ given by a multivariate sparse grid rule generated from a univariate Clenshaw-Curtis quadrature rule~\cite{Gerstner_G_NA_1998}. 
  \item \textsc{Cubature} -- Stroud cubature rules of degree 2, 3, and 5~\cite{Stroud_book_71}. These rules again integrate on $[-1,1]^d$ with respect to the uniform measure.
  \item \textsc{Sobol} -- A quasi-Monte Carlo Sobol sequence~\cite{Sobol_S_IJMPC_1995} for approximating integrals on $[-1,1]^d$ using the uniform measure.
  \item \textsc{Reduced quadrature} -- The method in this paper, described in Section \ref{sec:algorithm}.
  \item \textsc{$\ell_1$ quadrature} -- The ``initial guess" for the \textsc{Reduced quadrature} algorithm, using the solution of the LP algorithm summarized in Section \ref{sec:algorithm-initial}.
\end{itemize}
The \textsc{Sparse grid}, \textsc{Sobol}, and \textsc{cubature} methods are usually applicable only for integrating over $[-1,1]^d$ with the uniform measure. When $\mu$ has a density $w(x)$ with support $D \subseteq [-1,1]^d$, we will use these methods to evaluate $\int_D f(x) \dx{\mu}(x)$ by integrating $f(x) w(x)$ with respect to the uniform measure on $[-1,1]^d$, where we assign $w = 0$ on $[-1,1]^d \backslash D$.

\subsection{Tensor product measures}\label{sec:tp-measures}
Our reduced quadrature approach can generate quadrature rules for non-tensor-product measures but in this section we investigate the performance of our algorithm in the more standard setting of tensor-product measures. 

We begin by discussing the computational cost of computing our
quadrature rules. Specifically we compute quadrature rules for the
uniform probability measure on $D=[-1,1]^d$ in up to 10
dimensions for all total-degree spaces with degree at most 20 or with
subspace dimension at most 3003. In all cases we were able to generate an
efficient quadrature rule using our approach for which the number of
quadrature points was equal to or only slightly larger ($<10$ points)
than the number of points suggested by the
heuristic~\eqref{eq:dof-heuristic}. The number of each points in the
quadrature rule, the number of moments matched $\lvert\Lambda\rvert$,
the size of a quasi-optimal rule $L(\Lambda)$, the
number of points $\ceil{\lvert\Lambda\rvert/(d+1)}$ suggested by the heuristic ~\eqref{eq:dof-heuristic}, and the number of iterations taken by non-linear least squares algorithm, is presented in in Table~\ref{tab:uniform-quad-rules-max-degree}. 

The final two columns of Table \ref{tab:uniform-quad-rules-max-degree}, the number of points in the the reduced quadrature rules and the number of iterations used by the local optimization are given as ranges because the final result is sensitive to the random candidate sets used to generate the initial condition. The ranges presented in the table represent the minimum and maximum number of points and iterations used to generate the largest quadrature rules for each dimension for 10 different initial candidate meshes. We observe only very minor variation in the number of points, but more sensitivity in the number of iterations is observed. 

The number of iterations needed by the local optimization to compute the quadrature rules was always a reasonable number: at most a small multiple of the number of moments being matched. However, the largest rules did take several hours to compute on a single core machine due to the cost of running the optimizer (we used scipy.optimize.least\_squares) and forming the Jacobian matrix of the objective. 

Profiling the simulation revelated that long run times were due to the cost of evaluating the Jacobian \eqref{eq:jacobian} of the objective function, and the singular value decomposition repeatedly called by the optimizer. This run-time could probably be reduced by using a Krylov-based nonlinear least squares algorithm, and by computing the Jacobian in parallel. We expect that such improvements would allow one to construct such rules in higher dimensions in a reasonable amount of time; however, even our unoptimized code was able to compute such rules on a single core in a moderate number of dimensions.

\begin{table}[ht]
\begin{center}
\begin{tikzpicture}
 \node (tbl) {
  \begin{tabular}{ c  c  c  c  c  c  c}
    \arrayrulecolor{darkblue}
    Dimension & Degree&  $\lvert\Lambda\rvert$ & $L(\Lambda)$ &
                                                               $\ceil{\lvert\Lambda\rvert/(d+1)}$ & No. Points &No. Iterations\\[1.ex]
    2 & 20 & 231 & 66 & 77 &77-79 & 262-854 \\                     \midrule
    3 & 20 & 1771 & 286 & 443 &445-447 & 736-1459\\                \midrule
    4 & 13 & 2380 & 210 & 476 &479-480 & 782-2033\\                \midrule
    5 & 10 & 3003 & 252 & 501 &506-508 & 3181-4148\\               \midrule
    10 & 5 & 3003 & 66 & 273 &273-274 & 511-2229 \\[0.5ex]      
  \end{tabular}
};
\begin{pgfonlayer}{background}
\draw[rounded corners,top color=lightblue,bottom color=darkblue!50!black,
    draw=white] ($(tbl.north west)+(0.14,0)$)
    rectangle ($(tbl.north east)-(0.13,0.9)$);
\draw[rounded corners,top color=white,bottom color=darkblue!50!black,
    middle color=lightblue,draw=blue!20] ($(tbl.south west)
    +(0.12,0.5)$) rectangle ($(tbl.south east)-(0.12,0)$);
\draw[top color=blue!1,bottom color=gray!50,draw=white]
    ($(tbl.north east)-(0.13,0.6)$)
    rectangle ($(tbl.south west)+(0.13,0.2)$);
\end{pgfonlayer}
\end{tikzpicture}
\end{center}
\caption{Results for computing \textsc{Reduced quadrature} rules for total-degree spaces for the uniform measure on $[-1,1]^d$. Tabulated are the number of moments matched ($|\Lambda|$), the theoretical lower bound on the quadrature rule size from \eqref{eq:L-definition}, the number of the quadrature points given by the counting heuristic \eqref{eq:dof-heuristic}, the number of {reduced quadrature} points found, and the number of iterations required in the optimization. The {reduced quadrature} algorithm output is random since the initial candidate grid is a random set. Thus, the final two columns give a range of results over 10 runs of the algorithm.}
   \label{tab:uniform-quad-rules-max-degree}
\end{table}

To compare the performance of our reduced quadrature rules to existing algorithms consider the the corner-peak function often used to test quadrature methods~\cite{Genz_Book_1987},
\begin{align}\label{eq:genz-cp}  
 f_{\mathrm{CP}}(\V{x})=\left(1+\sum_{i=1}^d c_i\, x_i \right)^{-(d+1)},\quad \V{x}\in\Gamma=[0,1]^d
\end{align}
The coefficients $c_i$ can be used to control the variability over $D$ and the effective dimensionality of this function. We generate each $c_i$ randomly from the interval $[0,1]$ and subsequently normalize them so that $\sum_{i=1}^d c_i = 1$. For $X$ a uniform random variable on $D = [-1,1]^d$, the mean and variance of $f(X)$ can be computed analytically; these values correspond to computing integrals over $D = [-1,1]^d$ with $\mu$ the uniform measure.

Figure ~\ref{fig:genz-cp-convergence} plots the convergence of the
error in the mean of the corner-peak function using the reduced
quadrature rules generated for $\mu$ the uniform probability measure
on $[-1,1]^d$ with $d=2,3,4,5,10$. Because generating the initial
condition requires randomly sampling over the domain of integration
for a given degree,  we generate 10 quadrature rules and plot both the
median error and the minimum and maximum errors. There is sensitivity to the random candidate set used to generate the initial condition, however for each of the 10 repetitions a quadrature rule was always found and the error in the approximation of the mean of $f$ using these rules converges exponentially fast. 

For a given number of samples the error in the reduced quadrature rule estimate of the mean is significantly lower than the error of a Clenshaw-Curtis-based sparse grid. It is also more accurate than Sobol sequences up to 10 dimensions. Furthermore the reduced quadrature rule is competitive with the Stroud degree  2,3 and 5 cubature rules. However unlike the Stroud rules the polynomial exactness of the reduced quadrature rule is not restricted to low degrees (shown here) nor is it even resticted to total-degree spaces (as discussed in Section~\ref{sec:poly-spaces}).

In Figure ~\ref{fig:genz-cp-convergence} we also plot the error in the quadrature rule used as the initial condition for the local optimization used to generated the reduced rule. As expected, the error of the initial condition is larger than the final reduced rule for a given number of points. Moreover it becomes increasingly difficult to find an accurate initial condition as the degree increases. In two-dimensions, as the degree is increased past 13, the error in the initial condition begins to increase. Similar degradation in accuracy also occurs in the other higher dimensional examples. However, even when the accuracy of the initial condition still is extremely low, it still provides a good starting point for the local optimization, allowing us to accurately generate the reduced quadrature rule. 

\begin{figure}
\begin{center}
\includegraphics[width=0.48\textwidth]{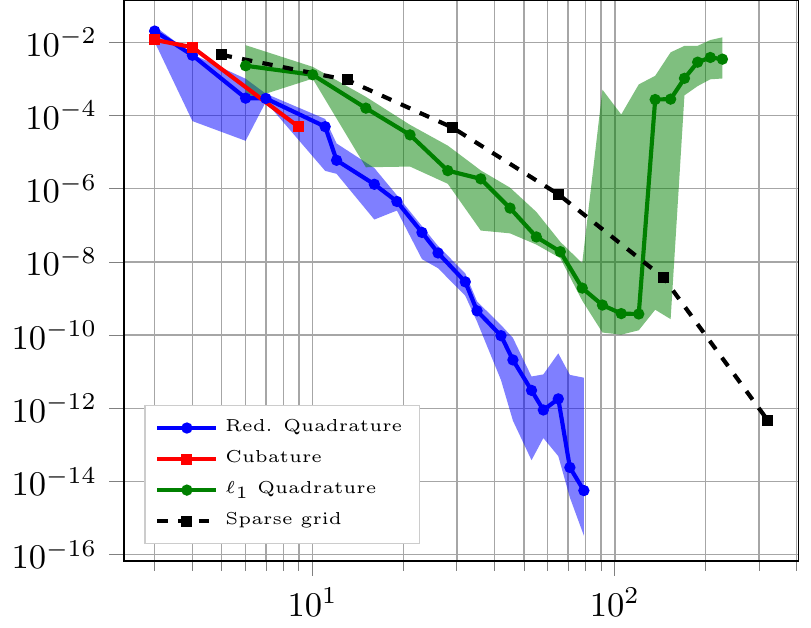}
\includegraphics[width=0.48\textwidth]{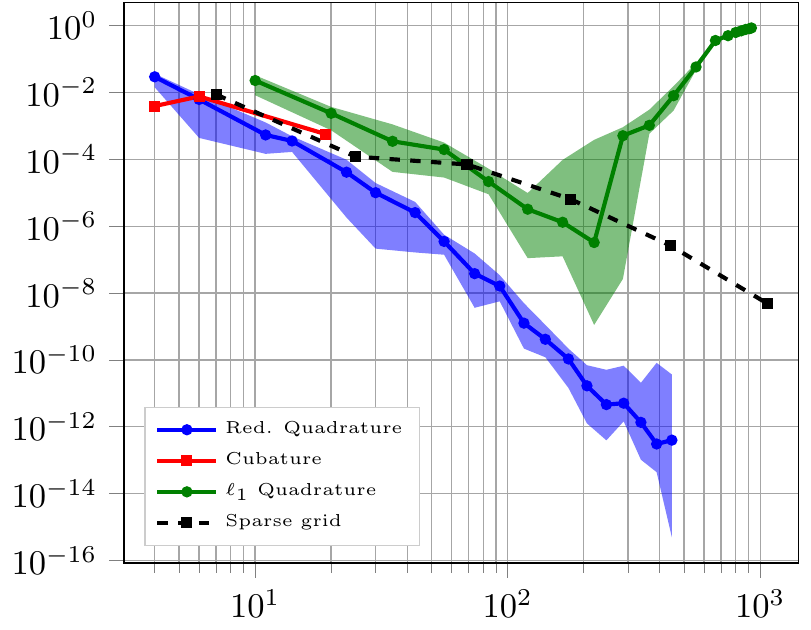}\\
\includegraphics[width=0.48\textwidth]{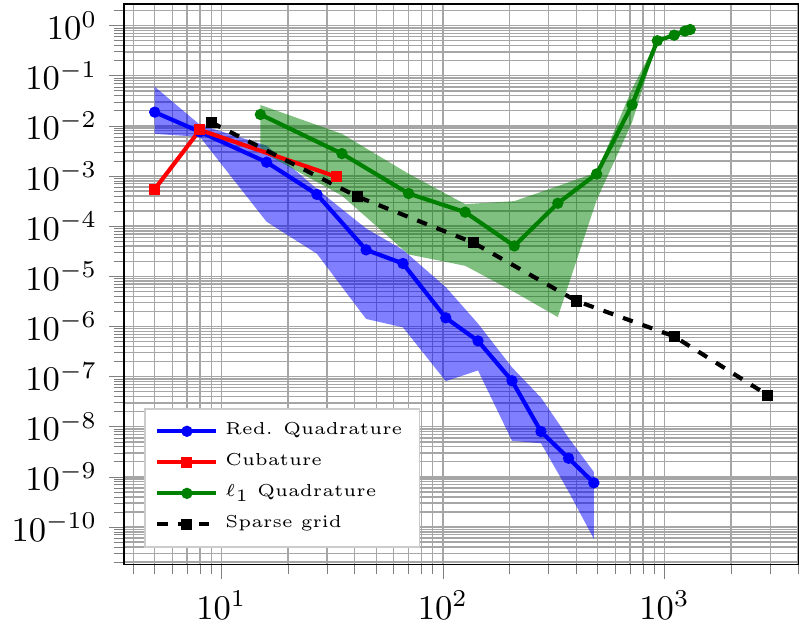}
\includegraphics[width=0.48\textwidth]{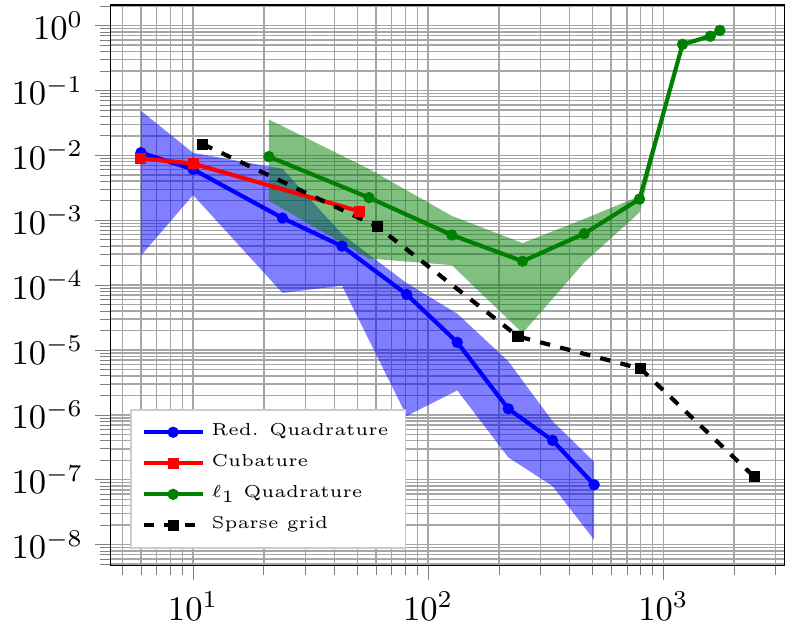}\\
\includegraphics[width=0.48\textwidth]{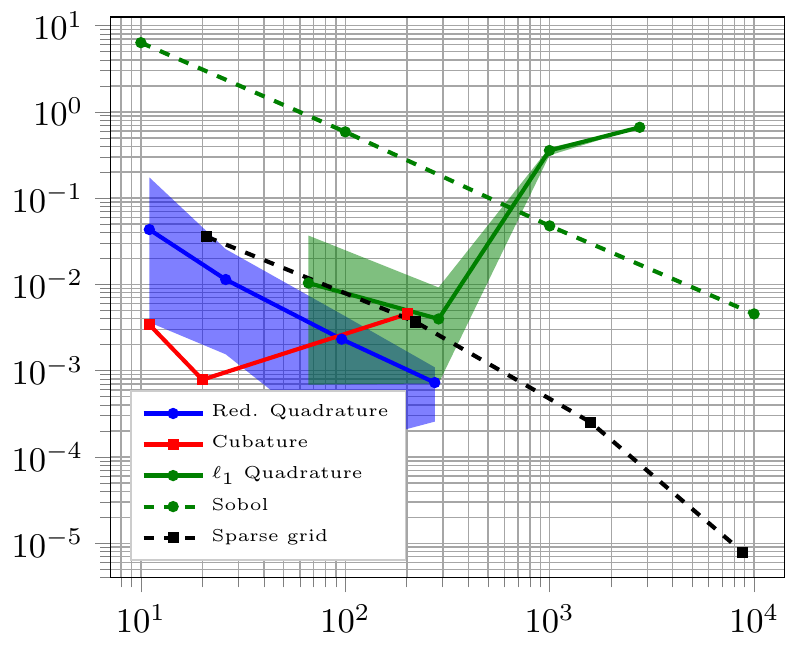}\\
\end{center}
\caption{Convergence of the error in the mean of the corner-peak
  function $f_\mathrm{CP}$~\eqref{eq:genz-modified-cp} computed using
  the reduced quadrature rule generated for the uniform
  measure. Convergence is shown for $d=2,3,4,5,10$ which are
  respectively shown left to right starting from the top left. Solid
  lines represent the median error of the total-degree quadrature
  rules for 10 different initial conditions. Transparent bands
  represent the minimum and maximum errors of the same 10 repetitions.}
\label{fig:genz-cp-convergence}
\end{figure}



\subsection{Beyond tensor product measures}\label{sec:non-tp-measures}
Numerous methods have been developed for multivariate quadrature for tensor-product measures, however much less attention has been given to computing quadrature rules for arbitrary measures, for example those that exhibit non-linear correlation between dimensions. In this setting Monte Carlo quadrature is the most popular method of choice. 

\subsubsection{Chemical reaction model}
The reduced quadrature method we have developed can generate quadrature rules to integrate over arbitrary measures. Consider the following model of competing species absorbing onto a surface out of a gas phase
\begin{align}\label{eq:chemical-species}
\begin{split}
\frac{du_1}{dt} &= az-cx_1 - 4du_1u_2\\
\frac{du_2}{dt} &= 2x_2z^2 - 4du_1u_2\\
\frac{du_3}{dt} &= ez - fu_3\\
z=u_1&+u_2+u_3,\quad u_1(0)=u_2(0)=u_3(0)
\end{split}
\end{align}
The constants $c$, $d$, $e$, and $f$ are fixed at the nominal values $c=0.04$, $d=1.0$, $e=0.36$, and $f=0.016$. The parameters $x_1$ and $x_2$ will vary over a domain $D$ endowed with a non-tensor-product probability measure $\mu$. Viewing $X = (X_1, X_2) \in \R^2$ as a random variable with probability density $\dx{\mu}(x)$, we are interested in computing the mean of the mass fraction of the third species $u_3(t=100)$ at $t=100$ seconds. 

We will consider two test problems, problem $\mathrm{I}$ and problem $\mathrm{II}$, each defined by its own domain $D$ and measure $\mu$. We therefore have two rectangular domains $D_{\mathrm{I}}$ and $D_{\mathrm{II}}$ with measures $\mu_{\mathrm{I}}$ and $\mu_{\mathrm{II}}$, respectively. The domains and the measures are defined via affine mapping of a canonical domain and measure:
\begin{align}\label{eq:banana-density}
  \dx{\mu}(x) &= C\exp(-(\frac{1}{10}x_1^4 + \frac{1}{2}(2x_2-x_1^2)^2)), & {x}&\in D = [-3,3]\times[-2,6],
\end{align}
where $C$ is a constant chosen to normalize $\mu$ as a probability measure. This density is called a ``banana density", and is a truncated non-linear transformation of a bivariate standard Normal Gaussian distribution.  We define $(D_{\mathrm{I}}, \mu_{\mathrm{I}})$ and $(D_{\mathrm{II}}, \mu_{\mathrm{II}})$ as the result of affinely mapping $D$ to the domains
\begin{align*}
  D_{\mathrm{I}}&= [0,4.5]\times[5,35], & D_{\mathrm{II}} &= [1.28,1.92]\times[16.6,24.9].
\end{align*}

The response surface of the mass fraction over the integration domain of problem I is shown in the right of Figure~\ref{fig:banana-density}. The response has a strong non-linearity which makes it ideal for testing high-order polynomial quadrature. For comparison the integration domain of problem II is also depicted in the right of Figure~\ref{fig:banana-density}. The domain of problem II is smaller and thus the non-linearity of the response is weaker, consequently integrating the mean of problem II is easier than integrating the mean of problem II  for polynomial quadrature methods. 
\begin{figure}
\begin{center}
\includegraphics[width=0.4\textwidth]{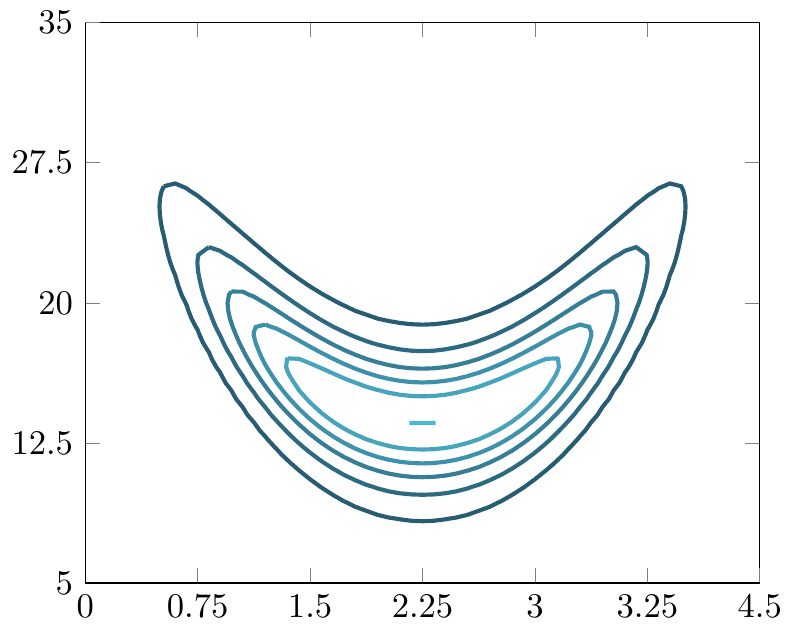}
\includegraphics[width=0.49\textwidth]{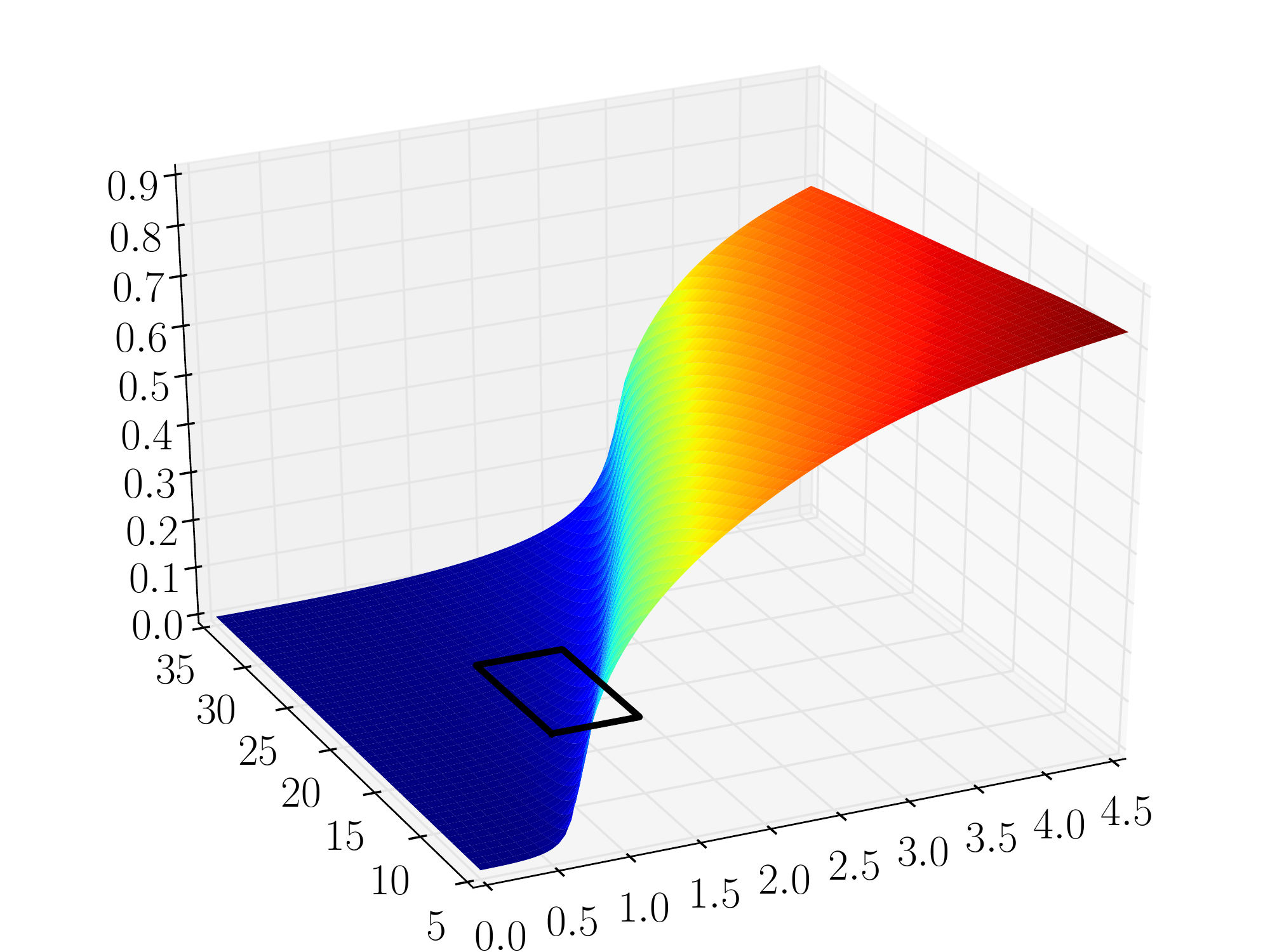}
\end{center}
\caption{(Left) Contour plot of the banana density \eqref{eq:banana-density} mapped to $D_I$. (Right) Response surface of the mass fraction $u_3$ at $t=100$ predicted by the chemical reaction model. The response is plotted over the entire domain of problem I and the black box represents the smaller domain of problem II.
}
\label{fig:banana-density}
\end{figure}

Figure~\ref{fig:banana-chemical-reaction-convergence} compares the
convergence of the of the error in the mean of the mass fraction of
the chemical reaction model computed using the reduced quadrature
rule, with the estimates of the mean computed using Monte Carlo
sampling and Clenshaw-Curtis based sparse grids. Unlike previous
examples the mean cannot be computed analytically so instead we
compute the mean using $10^6$ samples from a $2$-dimensional Sobol
sequence. Sparse grids can only be constructed for tensor-product
measures, so here we investigate performance of sparse grids by
including the probability density in the integrand and integrating
with respect to the uniform measure. This is the most common strategy to tackle a non-tensor-product integration problem using a tensor-product quadrature rule. 

For the more
challenging integral defined over $D_\mathrm{I}$, the reduced
quadrature error out-performs Monte Carlo and sparse grid quadrature
however the difference is more pronounced when integrating over
$D_\mathrm{II}$. The apparent slower rate of convergence of
reduced quadrature for problem I is because a high-polynomial degree
is needed to accurately approximate the steep response surface features over this
domain. The performance of the reduced quadrature method is related to
how well the integrand can be approximated by a polynomial.   
\begin{figure}
\begin{center}
\includegraphics[width=0.49\textwidth]{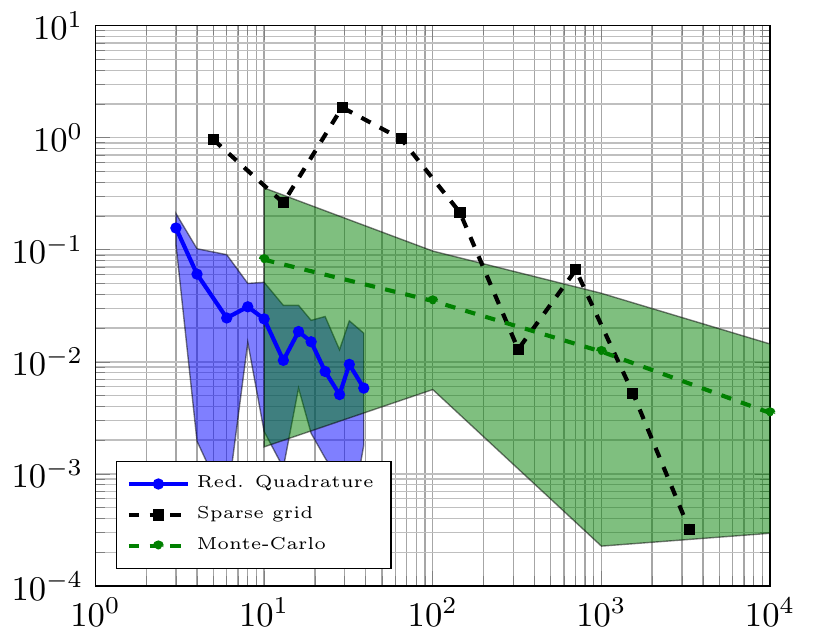}
\includegraphics[width=0.49\textwidth]{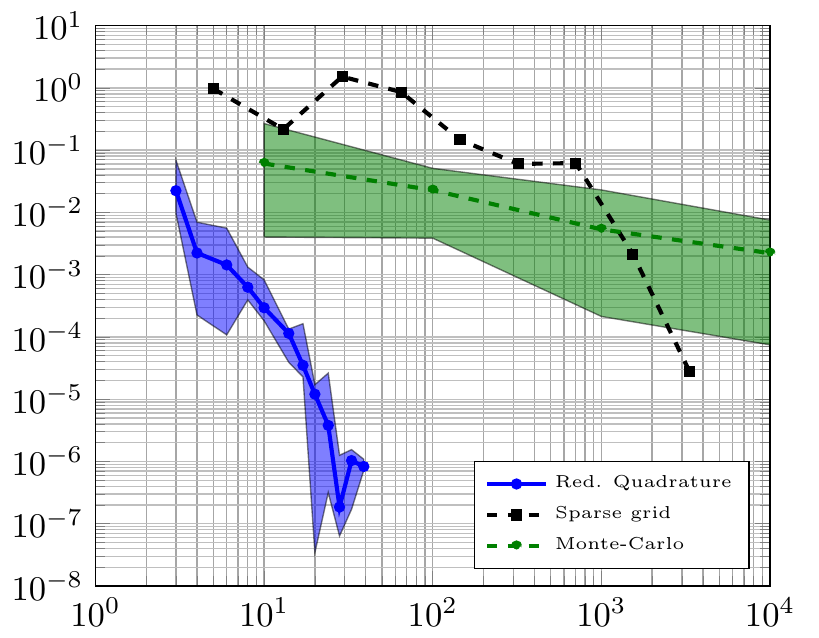}
\end{center}
\caption{Convergence of the error in the mean of the mass fraction of the chemical reaction model computed using the reduced quadrature rule generated for the banana-density over (right) $D_\mathrm{II}$ and (left) $D_\mathrm{I}$. Error is compared to popular alternative quadrature methods.}
\label{fig:banana-chemical-reaction-convergence}
\end{figure}

\subsubsection{Sample-based moments}\label{sec:non-tp-sample-moments}
Note that both generating the initial condition~\eqref{eq:bpdn-problem} using and solving the local optimization problem~\eqref{eq:local-optimization} involve matching moments. Throughout this paper we have computed the moments of the polynomials we are matching analytically (or to machine precision with high-order quadrature). However situations may arise when one only has samples from the probability density of a set of random variables. For example Bayesian inference~\cite{Stuart_AN_2010} is often used to infer densities of random variables conditional on available observational densities. The resulting so called posterior densities are almost never tensor-product densities. Moreover it is difficult to compute analytical expressions for the posterior density and so Markov Chain Monte Carlo (MCMC) sampling is often used to draw a set of random samples from the posterior density. 

Consider a model $f_\text{p}({x}):\realsmap{d}{1}$ parameterized by $d$ variables ${x}=(x_1,\ldots,x_{d})\in D \subset\reals^{d}$,
predicting an unobservable quantity of interest (QoI). There is a true underlying value of $x$ that is unknown. In the example \eqref{eq:chemical-species}, $f_p$ is the mass fraction $u_3$ of the chemical reaction model. We wish to quantify the effect of the uncertain variables on the model prediction, and we use Bayesian inference to accomplish this.

In the standard inverse problems setup, we have no direct measurements of $f_p$ but we can make observations ${{y}_o}$ of other quantities which we can use to constrain estimates of uncertainty in the unobservable QoI. To make this precise, let $f_\text{o}({x}) : \realsmap{d}{n_o}$ be a model, parameterized by the same $d$ random variables ${x}$, which predicts a set of $n_o$ observable quantities. Bayes rule can be used to define the posterior density for the model parameters ${x}$ given observational data ${{y}_o}$:
\begin{align}\label{eq:posterior}
\pi({x}|{{y}_o})=\frac{\pi({{y}_o}|{x})\pi({x})}{\int_{\Gamma}
({{y}_o}|{x})\pi({x})d{x}},
\end{align}
where any prior knowledge on the model parameters is captured 
through the prior density $\pi(\V{x})$.

The construction of the posterior is often not the end goal of an analysis instead one is often interested in statistics on the unobservable QoI. Here we will
focus on moments of the data informed predictive distribution, for example the mean
prediction
\begin{equation}\label{eq:post-pred-moments}
m_p = \int_D f_\text{p}({x})\pi({x}|{{y}_o})\dx{{x}}
\end{equation}

In practice the posterior of the chemical model parameters may be obtained by observational data ${y}$ from chemical reaction experiments and an observational model $f_\text{o}$ that is used to numerically simulate those experiments. However for ease of discussion we will assume the posterior distribution~\eqref{eq:posterior} is given by the banana-type density defined over $D_\mathrm{I}$ or $D_\mathrm{II}$.  The banana density \eqref{eq:banana-density} has been used previously to test Markov Chain Monte Carlo (MCMC) methods~\cite{Parno_M_arxiv_2014} and thus is a good test case for Bayesian inference problems. 

As in the previous section we will use the reduced quadrature method
to compute $m_p$, however here we will investigate the performance
of the quadrature rules that are generated from \textit{approximate} moments. The approximate moments are those computed via Monte Carlo sampling from $\pi(x | y_o)$.  These approximate moments are used to generate reduced quadrature rules, i.e. they are used as inputs when solving~\eqref{eq:bpdn-problem} and~\eqref{eq:local-optimization}. Figure~\ref{fig:banana-chemical-reaction-convergence-samples} illustrates the effect of using a finite number of samples to approximate the polynomial moments. The left of the figure depicts the convergence of the error in the mean of the mass fraction for the banana density defined over $D_\mathrm{I}$. The right shows errors for the banana density defined over $D_\mathrm{II}$. The right-hand figure illustrates that the accuracy of the quadrature rule is limited by the accuracy of the moments used to generate the quadrature rule. Once the error in the approximation of the mean reaches the accuracy of the moments, the error stops decreasing when the number of quadrature points is increased. The error saturation point can be roughly estimated by the typical Monte Carlo error which scales like $P^{-1/2}$, where $P$ is the number of samples used to estimate the polynomial moments. The saturation of error present in the right-hand figure is not as evident in the left-hand figure, and this is because the error of the quadrature rules using exact moments is greater than the error in the Monte Carlo estimate of the moments using based upon $10^4$ and $10^6$ samples.  

Our Monte Carlo samples from $\pi(x|y_o)$ were generated using rejection sampling, which is an exact sampler. While MCMC is the tool of choice for sampling from high-dimensional non-standard distributions, it is an approximate sampler. If we had generated samples using MCMC, then our approximate moments contain two error sources: that from finite sample size, and that from approximate sampling. For this reason, we opted to use the exact rejection sampling technique. However, we expect that the results in Figure \ref{fig:banana-chemical-reaction-convergence-samples} would look similar when using MCMC samples.

\begin{figure}
\begin{center}
\includegraphics[width=0.49\textwidth]{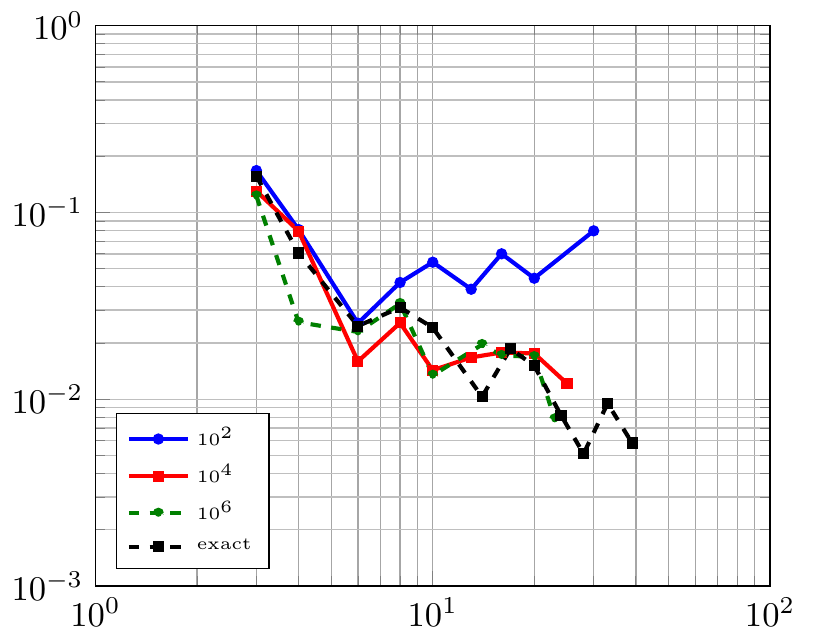}
\includegraphics[width=0.49\textwidth]{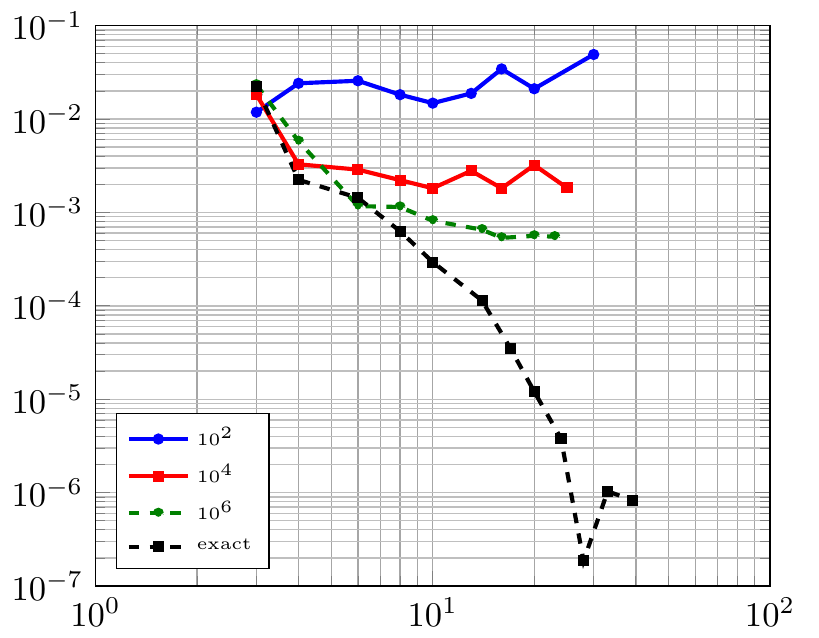}
\end{center}
\caption{Convergence of the error in the mean of the mass fraction of the chemical reaction model computed using the reduced quadrature rule generated for the banana-density using approximate moments. The $x$ axis in the plots indicates the number of points in a reduced quadrature rule generated from approximate moments. The numbers in the legend indicate the number of Monte Carlo samples used to approximate moments.
(Right) Convergence over $D_\mathrm{I}$ and (Left) $D_\mathrm{I}$.} 
\label{fig:banana-chemical-reaction-convergence-samples}
\end{figure}

\subsection{High-dimensional quadrature}
The cost of computing integrals with total-degree quadrature rules increases exponentially with dimension. This cost can be ameliorated if one is willing to consider subspaces that are more exotic than total-degree spaces. In this section we show how our reduced quadrature rules can generate quadrature for non-standard subspaces, taking advantage of certain structures that may exist in an integrand. Specifically we will demonstrate the utility of reduced quadrature rules for integrating high-dimensional functions that can be approximated by (i)  low-order ANOVA expansions and (ii) by a function of a small number of linear combinations of the function variables (such functions are often referred to as ridge functions). 

\subsubsection{ANOVA decompositions}\label{sec:poly-spaces}
Dimension-wise decompositions of the input-output relationship have arisen as a
successful means of delaying or even breaking the curse of dimensionality for
certain applications~\cite{Griebel_H_JC_2010,Foo_K_JCP_2010,Ma_Z_JCP_2009,Jakeman_R_SGA_2013}. Such an approach represents the model outputs as a 
high-dimensional function $f(x)$ dependent on the model inputs
$x=(x_1,\ldots,x_d)$. It uses an ANOVA type decomposition
\begin{equation}
f(x_1,\ldots,x_d)=f_0+\sum_{n=1}^df_i(x_i)+\sum_{i,j=1}^df_{i,j}(x_i,
x_j)+\cdots+f_{i,\ldots,d}(x_i,\ldots,x_d)
\end{equation}
which separates the function into a sum of subproblems. The first term is the
zero-th order effect which is a constant throughout the d-dimensional variable
space. The $f_i(x_i)$ terms are the first-order terms which represent the
effect of each variable acting independently of all others. The
$f_{i,j}(x_i,x_j)$ terms are the second-order terms which are the contributions
of $x_i$ and $x_j$ acting together, and so on.
In practice only a small number of interactions contribute significantly to the
system response and consequently only a low-order approximation is needed to
represent the input-output mapping accurately~\cite{Wang_S_SISC_2005}. 

In this section we show that reduced quadrature rules can be constructed for functions admitting ANOVA type structure. Specifically consider the following modified version of the corner peak function~\eqref{eq:genz-cp}  
\begin{align}\label{eq:genz-modified-cp}  f_{\mathrm{MCP}}=\sum_{i=1}^{d-1}\left(1+c_ix_i+c_{i+1}x_{i+1}\right)^{-3}, \quad x\in D = [0,1]^d
\end{align}
with $d=20$.
This function has at most second order ANOVA terms, and can be integrated exactly using the uniform probability measure on $D$. 

We can create a quadrature rule ideally suited to integrating functions that have at most second-order ANOVA terms. Specifically we need only customize the index set $\Lambda$ that is input to Algorithm~\ref{alg:reduced-quadrature}. To emphasize a second-order ANOVA approximation, we compute moments of the form
$$\int_\Gamma p_\alpha(x)\,d\mu(x),\quad \forall\alpha\in\Lambda=\{\alpha\;|\;\lVert\alpha\rVert_0\le 2 \textrm{ and } \lVert\alpha\rVert_1\le k\}$$
for some degree $k$.

In Figure ~\ref{fig:genz-modified-cp-error} we compare the error in the mean computed using our reduced quadrature method, with the errors in the estimates of the mean computed using popular high-dimensional integration methods, specifically Clenshaw Curtis sparse grids, Quasi Monte Carlo integration based upon Sobol sequences and low-degree (Stroud) cubature rules. By tailoring the reduced quadrature rule to the low-order ANOVA structure of the integrand, the error in the estimate of the mean is orders of magnitude smaller than the estimates computed using the alternative methods. 
\begin{figure}
\begin{center}
\raisebox{.09\textwidth}{
\includegraphics[width=0.24\textwidth]{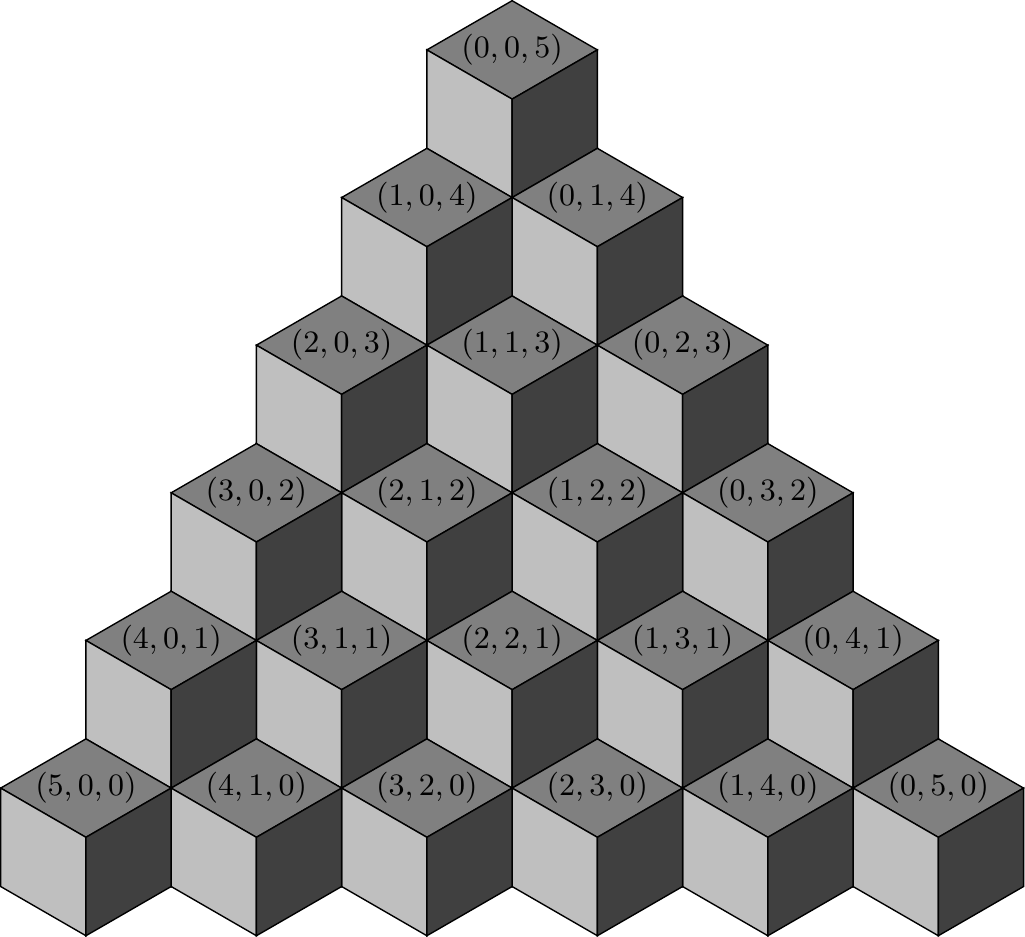}
\includegraphics[width=0.24\textwidth]{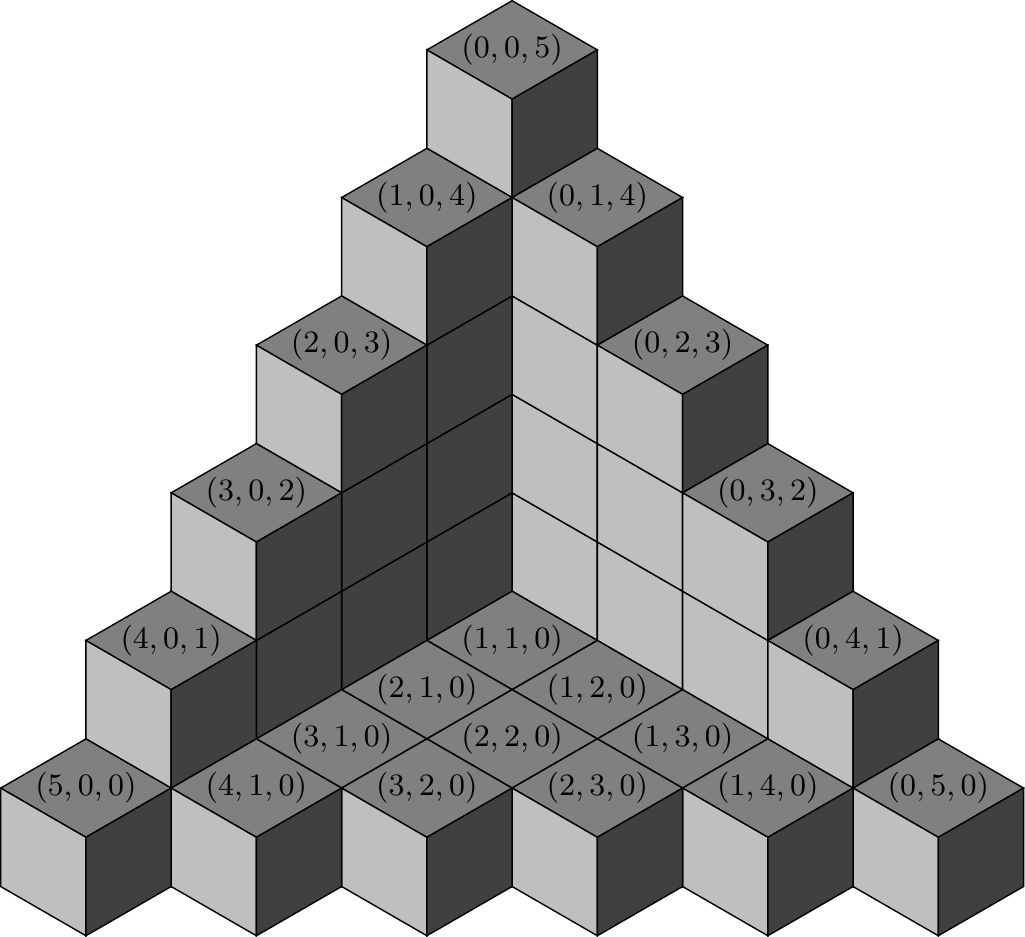}
}
\includegraphics[width=0.49\textwidth]{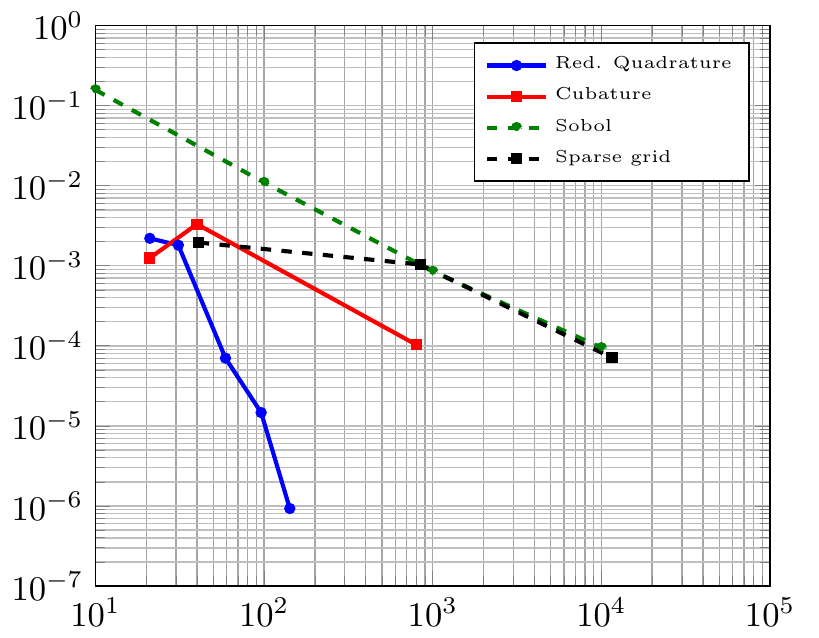}
\end{center}
\caption{(Left) Comparison of a three dimensional total degree index set of degree 5 with a 2nd-order ANOVA index set of degree 5. (Right) Convergence of the error in the mean of the modified corner-peak function $f_\mathrm{MCP}$~\eqref{eq:genz-modified-cp} computed using reduced quadrature rules for the uniform measure.}
\label{fig:genz-modified-cp-error}
\end{figure}
Note that sparse grids are a form of an anchored ANOVA expansion~\cite{Griebel_H_JC_2010} and delay the curse of dimensionality by assigning decreasing number of samples to resolving higher order ANOVA terms. However, unlike our reduced quadrature rule method sparse grids cannot be tailored to the exact ANOVA structure of the function.

\subsubsection{Ridge functions}\label{sec:dim-reduction}
In this section we show that our algorithm can be used to integrate high-dimensional ridge functions.
Ridge functions are multivariate functions that can be expressed as a function of a small number of linear combinations of the input variables variables \cite{Pinkus_book_2015}. We define a ridge function to be a function of the form 
$f: \mathbb{R}^d\rightarrow\mathbb{R}$ that can be expressed as a function $g$ of $s < d$ rotated variables, 
\begin{align*}
  f(z) &= g(\rotation z), & \rotation &\in\mathbb{R}^{s\times d}.
\end{align*}
In applications it is common for $d$ to be very large, but for $f$ to be a(n approximate) ridge function with $s \ll d$. When integrating a ridge function one need not integrate in $\mathbb{R}^d$ but rather can focus on the more tractable problem of integrating in $\mathbb{R}^s$. The difficulty then becomes integrating with respect to the transformed measure of the high-dimensional integral in the lower-dimensional space, which is typically unknown and challenging to compute.

When $d$-dimensional space is a hypercube, then the corresponding $s$-dimensional domain of integration is a multivariate zonotope, i.e., a convex, centrally symmetric polytope that is the $s$-dimensional linear projection of a $d$-dimensional hypercube. The vertices of the zonotope are a subset of the vertices of the $d$-dimensional hypercube projected onto the $s$-dimensional space via the matrix $\rotation $. 

When computing moment-matching quadrature rules on zonotopes we must amend the local optimization problem to include linear inequality constraints, to enforce that the quadrature points chosen remain inside the zonotope. The inequality constraints of the zonotope are the same constraints that define the convex hull of the zonotope vertices.\footnote{Note that most non-linear least squares optimizers do not allow the specification of inequality constraints so to compute quadrature rules on a zonotope we we used a sequential quadratic program.} Computing all the vertices of the zonotope $\{\rotation v\;|\;v\in[-1,1]^d\}$ can be challenging: The number of vertices of the zonotope grows exponentially with the large dimension $d$. To address this issue we use a randomized algorithm~\cite{Stinson_GC_ARXIV_2016} to find a subset of vertices of the zonotope.  This algorithm produces a convex hull that is a good approximation of the true zonotope with high-probability. In all our testing we found that the approximation of the zonotope hull did not noticeably affect the accuracy of the quadrature rules we generated.

We assume that $y \in \R^d$ is the $d$-dimensional variable with a measure $\nu$. Since $d$ is large, $\nu$ is typically a tensor-product measure. In the projected space $x \coloneqq A y \in \R^s$, this induces a new measure $\mu$ on the zonotope $D$ that is not of tensor-product form. With this setup, the $\mu$-moments can be computed analytically by taking advantage of the relationship $x=\rotation z$. For further details see Appendix~\ref{app:ridge-function-moments}.

Once a quadrature rule on a zonotope $D$ is constructed, some further work is needed before it can be applied to integrate the function $f$ on the original $d$-dimensional hypercube. Specifically we must transform the quadrature points $x\in D$ back into the hypercube. The inverse transformation of $\rotation $ is not unique, but if the function is (approximately) constant in the directions orthogonal to $x$, then any choice will do. In our example we set this transformation as $y = \rotation ^T x\in D$. The integral of the ridge function can then be approximated by
$$
\int_{D_y} f(y)\,d\nu(x) \approx \sum_{i=1}^M f(\rotation ^T x_i) w_i
$$   
where $(x_i, w_i)$ are a quadrature rule generated to integrate over the $s$-dimensional domain $D$ with the non-tensor-product measure $\mu$.

In the following we will consider the integration of a high-dimensional ridge function with $\nu$ the uniform probability measure on $[-1,1]^d$. We will again consider the integrating the moments mass fraction of the third species $u_3(t=100)$ of the competing species model from Section~\ref{sec:non-tp-measures}. However now we set $x=\rotation y$, where $y$ with $d=20$ and $\rotation \in\mathbb{R}^{2\times 20}$ is a randomly generated matrix with orthogonal rows. This makes the mass fraction a ridge function of two variables.

For a realization of $\rotation $ we plot the two resulting dimensional zonotope that defines the domain of integration of the variables $x$ in Figure~\ref{fig:zonotope} (left). The new probability density $\mu$ is depicted in the same figure. It is obvious that the transformed density $\mu$ is no longer uniform. In Figure ~\ref{fig:zonotope} (right) we plot the convergence of the error in the mean value of the ridge function computed using reduced quadrature rules. The error for the reduced quadrature approach decays exponentially fast and for a given number of function evaluations is orders of magnitudes smaller than the error obtained using Clenshaw-Curtis sparse grids and Sobol sequences (a QMC method) in the 20 dimensional space.
\begin{figure}
\begin{center}
\includegraphics[width=0.45\textwidth]{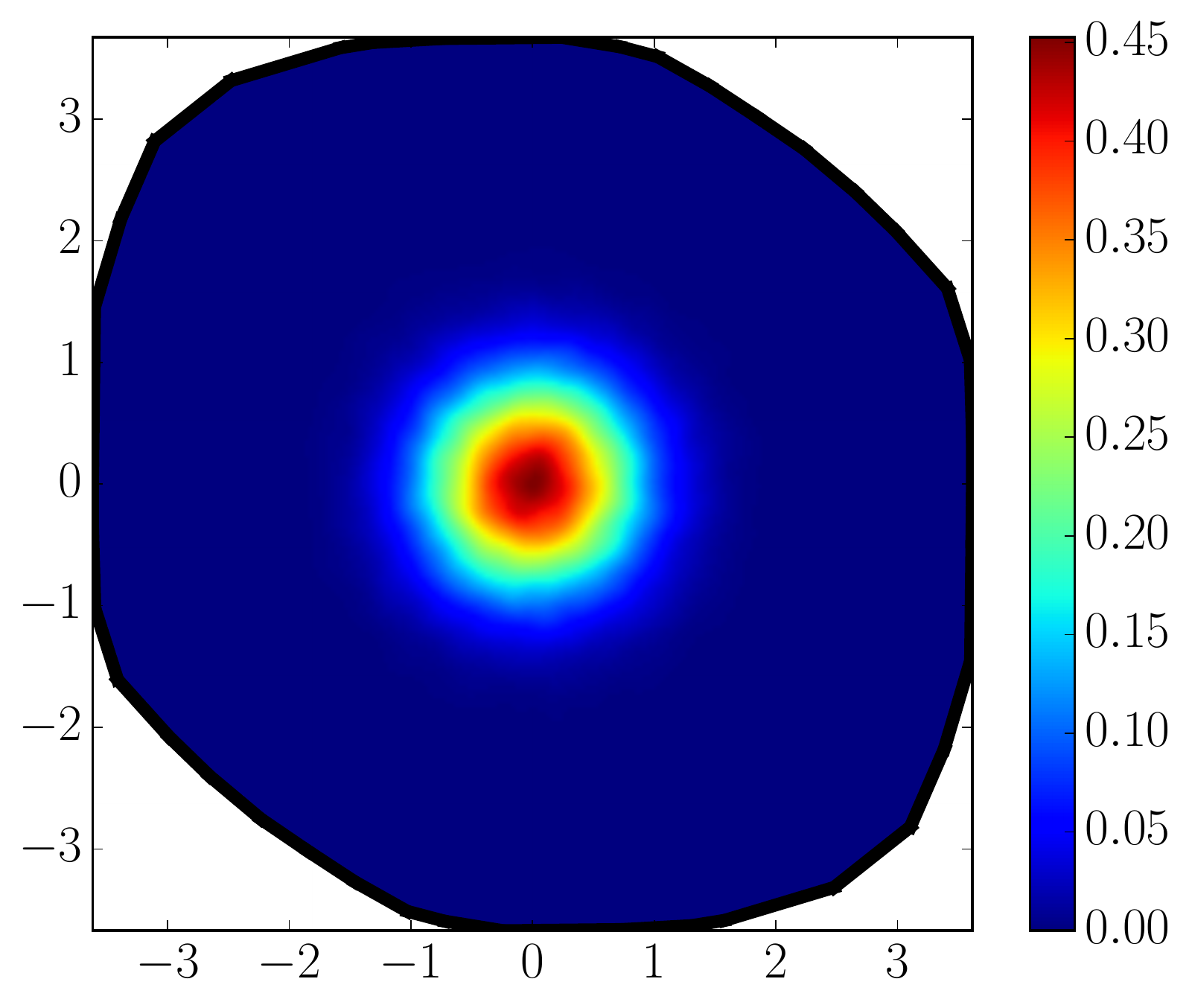}
\includegraphics[width=0.49\textwidth]{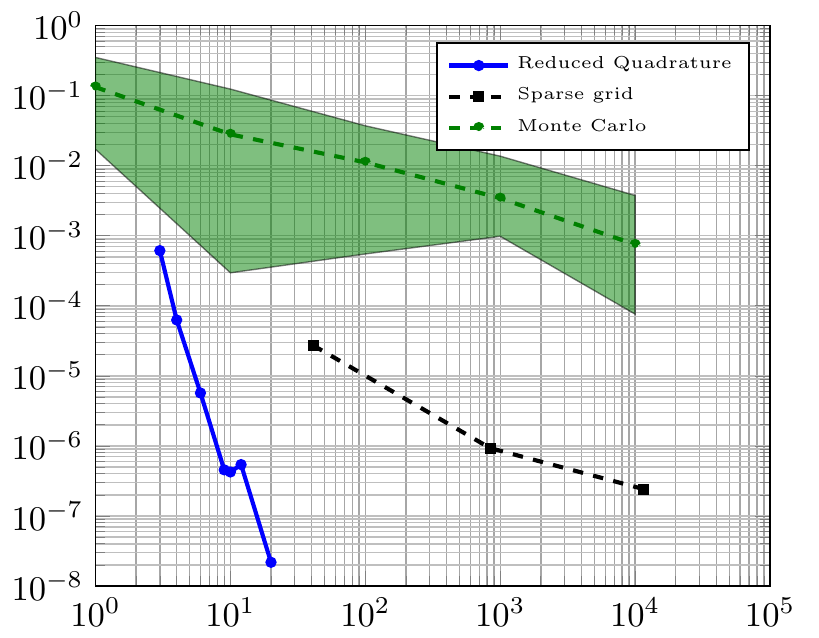}
\end{center}
\caption{(Left) The zonotope defining the domain of integration and the joint probability density of the two variables defining the two-dimensional ridge function of 20 uniform variables. The affine mapping described in Section \ref{sec:non-tp-measures} is used to center the mean of the zonotope density over the highly non-linear region of the chemical reaction model response surface show in the right of Figure~\ref{fig:banana-density}. (Right) The convergence of the error in the mean value of the ridge function computed using reduced quadrature rules over the two-dimensional zonotope compared against more standard quadrature rules in the full 20-dimensional space.}
\label{fig:zonotope}
\end{figure}


\section{Conclusions}\label{sec:conclusion}
In this paper we present a flexible numerical algorithm for generating
polynomial quadrature rules. The algorithm employs optimization to find
a quadrature rule that can exactly integrate, up to a specified
optimization tolerance, a set of polynomial moments.  
We provide novel theoretical
analysis of the minimal number of nodes of a polynomial rule that is exact for a
given set of polynomial moments.  Intuition is developed using a
simple set of analytical multivariate examples that address existence
and uniqueness of optimal rules. In practice we often cannot computationally find an
optimal quadrature rule. However we can find rules that have
significantly fewer points than the number of moments being
matched. Typically the number of points $M$ is only slightly larger ($<10$
points) than the number of moments, $N$, divided by the dimension plus one, $d+1$,
i.e. $M\approx N/(d+1)$.
The algorithm we present is flexible in the sense that it can construct
quadrature rules with positive weights: (i) for any measure for
which moments can be computed; (ii) using analytic or sample based
moments; (iii) for any set of moments, e.g. total-degree,
hyperbolic-cross polynomials. We have shown that this algorithm can be used to efficiently integrate functions in a variety of settings: (i) total-degree integration on hypercubes; (ii) integration for non-tensor-product measures; (iii) high-order integration using approximate moments; (iv) ANOVA approximations; and (v) ridge function integration.


\section{Acknowledgements}
J.D.~Jakeman's work was supported by DARPA EQUIPS. Sandia National
Laboratories is a multimission laboratory managed and operated by
National Technology and Engineering Solutions of Sandia, LLC., a
wholly owned subsidiary of Honeywell International, Inc., for the
U.S. Department of Energy's National Nuclear Security Administration
under contract DE-NA-0003525.

A. Narayan is partially supported by AFOSR FA9550-15-1-0467 and DARPA EQUiPS N660011524053.
\appendix
\section{The LP algorithm}\label{app:lp-algorithm}
In this section we detail the reduced quadrature algorithm first presented in~\cite{ryu_extensions_2014}. 
Let a finite index set $\Lambda$ be given with $|\Lambda| = N$, and suppose that $\left\{ p_j \right\}_{j=1}^N$ is a basis for $P_\Lambda$. In addition, let $r$ be a polynomial on $\R^d$ such that $r \not\in P_\Lambda$. We seek to find a positive Borel measure $\nu$ solving
\begin{align*}
  \textrm{minimize } &\int r(x) \dx{\nu(x)} \\
  \textrm{subject to } &\int p_j(x) \dx{\nu(x)} = \int p_j(x) \dx{\mu(x)}, \hskip 5pt j=1, \ldots, N
\end{align*}
The authors in \cite{ryu_extensions_2014} propose the following procedure for approximating a solution:
\begin{enumerate}[label=\arabic*)]
  \item Choose a candidates mesh, $\left\{y_j\right\}_{j=1}^M$, of $S$ points on $D$. Solve the much more tractable finite-dimensional linear problem
    \begin{align*}
      \textrm{minimize } &\sum_{k=1}^S v_k r(y_k) \\
      \textrm{subject to } &\sum_{k=1}^S v_k p_j(y_k) = \int p_j(x) \dx{\mu(x)}, \hskip 5pt j=1, \ldots, n \\
      \textrm{and } & v_k \geq 0, \hskip 5pt k=1, \ldots, S
    \end{align*}
    The optimization is over the $S$ scalars $v_k$. Let the solution to this problem be denoted $v_k^\ast$.
  \item Identify $M \leq N$ clusters from the solution above. Partition the index set $\left\{1, \ldots, S\right\}$ into these $M$ clusters, denoted $C_j$, $j=1, \ldots, M$. Construct $M$ averaged points and weights from this clustering:
    \begin{align*}
      \widehat{{x}}_j &= \frac{1}{\widehat{w}_j} \sum_{k \in C_j} v_k^\ast y_k, & \widehat{w}_j &= \sum_{k \in C_j} v^\ast_k
    \end{align*}
    Note that the size-$M$ set $\left\{\widehat{{x}}_j, \widehat{w}_j \right\}_{j=1}^M$ is a positive quadrature rule, but it is no longer a solution to the optimization in the previous step.
  \item Solve the nonlinear optimization problem for the nodes ${x}_{(k)}$ and weights ${w}_{(k)}$, $k=1, \ldots, M$,
    \begin{align*}
      \textrm{minimize } &\sum_{j=1}^N \left( \int p_j(x) \dx{\mu(x)} - \sum_{k=1}^M w_k p_j({x}_{(k)}) \right)^2 \\
      \textrm{subject to } & {x}_{(k)} \in \Gamma \textrm{ and } w_k \geq 0,
    \end{align*}
    using the initial guess ${x}_{(k)} \gets \widehat{{x}}_k$ and $w_k \gets \widehat{w}_k$. 
\end{enumerate}
We refer to the above method as the LP algorithm.


\section{Reduced quadrature algorithms}
\label{app:algorithms}
This section presents pseudo code that outlines how to compute reduced quadrature rules as detailed in Section~\ref{sec:algorithm}. Algorithm~\ref{alg:reduced-quadrature} presents the entire set of steps for computing reduced quadrature rules and Algorithm~\ref{alg:cluster} details how the clustering algorithm used to generate the initial condition for the local optimization used to compute the final reduced quadrature rule.
\begin{algorithm}[ht]
\SetKwInOut{Input}{input}\SetKwInOut{Output}{output}
\SetKwRepeat{Do}{do}{while}

\Input{measure $\mu$ with and polynomial family $p$, index set $\Lambda$, quadrature tolerance $\epsilon$}
\Output{quadrature points ${X}=[{x}_{1},\ldots,{x}_{M}]$ and weights $\V{w}=(w_1,\ldots,w_M)^T$}

\BlankLine
With $N = |\Lambda|$, compute moments $\V{m}=(m_1,\ldots,m_{N})^T$, $$m_i=\int_\Gamma,p_\alpha({x})\,d\mu({x}), \quad\forall \alpha\in\Lambda$$

Generate $S$ candidate samples ${X}_S$ over integration domain $D$\;
Compute initial condition: $$\min \,\lVert \V{w}\rVert_1 \,\mathrm{ s.t. }\, \V{\Phi}\V{w}=\V{m}, \quad\Phi_{ij}=p_{\alpha}({x}_{k}),\,k\in[S],j\in[N]$$

Set minimum number of quadrature points $$M=N/(d+1)$$

$i=0$\;
\Do{$\lVert\V{\Phi}\V{w}-\V{m}\rVert_2>\epsilon$}{
Cluster initial condition into $M+i$ points and weights using Algorithm \ref{alg:cluster}\;
Solve \begin{align*}
      \textrm{minimize } &\lVert\V{\Phi}\V{w}-\V{m}\rVert_2 \;\mathrm{\, s.t }\; {x}_{k}\in D , w_k>0
    \end{align*}
Set $i \gets i + 1$
}
\caption{Reduced Quadrature Method}
\label{alg:reduced-quadrature}
\end{algorithm}

\begin{algorithm}[ht]
\SetKwInOut{Input}{input}\SetKwInOut{Output}{output}
\SetKwRepeat{Do}{do}{while}

\Input{Initial quadrature points ${X}=[{x}_{1},\ldots,{x}_{S}]$ and weights $\V{w}=(w_1,\ldots,w_S)^T$, number of clusters $M$ }
\Output{quadrature points $\widehat{{X}}$ and weights $\widehat{\V{w}}$}

\BlankLine

$\widehat{{X}}={X}$ and $\widehat{\V{w}}=\V{w}$\;
\While{$S>M$}{
$I = \argmin_{k\in[N]} \widehat{w}_k$\;
$J = \argmin _{k\in[N]} \lVert \widehat{{x}}_{k}-\widehat{{x}}_{I}\rVert_2$\;
Form new point as convex combination of ${x}_{I}$ and ${x}_{J}$
\begin{align*}
  {x}^\star = \frac{1}{w^\star} (\widehat{w}_I \widehat{{x}}^{I}+ \widehat{w}_J \widehat{{x}}^{J}), \quad\quad w^\star = \widehat{w}_I+\widehat{w}_J
    \end{align*}

Set $\widehat{w}_I={w^\star}$ and $\widehat{{x}}_{I}={x}^\star$\;
Remove $\widehat{w}_J$ and $\widehat{{x}}_{J}$ from $\widehat{X}$ and $\widehat{\V{w}}$\; 
$S=S-1$\;
}
\caption{Cluster Initial Quadrature Rule}
\label{alg:cluster}
\end{algorithm}

\section{Ridge function quadrature}\label{app:ridge-function-moments}
Consider a variable $y\in \mathbb{R}^d$ and a linear transformation $A \in\mathbb{R}^{s\times d}:\R^d\rightarrow \R^s$ which maps the variables $y$ into a lower dimensional set of variables $x = A y \in\mathbb{R}^s$, where $s\le d$. When $\nu$ is a measure in $\R^d$, this transformation induces a measure $\mu$ in $\R^s$. In this section we describe how to compute the moments of the measure $\mu$ in terms of those for $\nu$. Being able to compute such moments allows one to efficiently integrate ridge functions (see Section~\ref{sec:dim-reduction}).

One approach is to approximate the moments of $\mu$ via Monte Carlo sampling. That is, generate a set of samples $Y=\{y_{i}\}_{i=1}^S \subset \R^d$ from the measure $\nu$, and then compute a set of samples $X=AY$ in the $s$-dimension space. Given a multi-index set $\Lambda$ with basis $p_\alpha$, $\alpha \in \Lambda$, the $\mu$-moments of $p_\alpha$ can then be computed approximately via $$\frac{1}{M}\sum_{i=1}^M p_\alpha (x_{i}) = \frac{1}{M} \sum_{i=1}^M p_\alpha( A y_i).$$ 
Such an approach is useful when one cannot directly evaluate 
$\dx{\mu(x)}$ but rather only has samples from the measure. However
the accuracy of a moment matching quadrature rule will be limited by
the accuracy of the moments that are being matched. For moments
evaluated using Monte Carlo sampling the error in these moments decays slowly at a rate proportional to $M^{-1/2}$ and the variance of the polynomial $p_\alpha$.

However, when the higher-dimensional meaure $\nu$ is a tensor-product measure, $\nu(y)=\prod_{i=1}^d\nu_i(y_i)$, with each $\nu_i(x_i)$ a univariate measure, then the moments of the lower-dimensional measure $\mu$ can be computed analytically using, for example, a monomial basis. 

We need to compute the moments of a monomial basis of the variables $x$ with respect to the measure $\mu(x)$. Computing an expression of the measure $\mu(x)$ in terms of $\nu$ is difficult in general. Instead, we leverage the following equality
\begin{align*}
  \int_{D} P_\alpha(y) d\nu(y) &= \int_{D} p_\alpha(x) d\mu(x), & P_\alpha(y) &\coloneqq p_\alpha(A x).
\end{align*}
In particular, monomials in $x$ can be expanded in terms of the variables $y$, e.g $x^p=(A y)^p$, and the resulting polynomials $P_\alpha$ are just products of univariate integrals which can be computed analytically or to machine precision with univariate Gaussian quadrature. 

For example let $y\in[-1,1]^3$,  $\nu(y)$ be the uniform probability measure, and $x=Ay$, where $A\in\mathbb{R}^{2\times 3}$ then the moment of $x_1x_2$ is 
\begin{align*}
  \int_{D} x_1 x_2 d\mu(x)= \int_{[-1,1]^3} (A_{11}y_1+A_{12}y_2+A_{13}y_3)(A_{21}y_1+A_{22}y_2+A_{23}y_3) d\nu(x).
\end{align*}
The right-hand side high-dimensional integrand can be expanded into sums of products of univariate terms, and thus can be integrated with univariate quadrature. 

Let $A$ have rows $a_j^T \in \R^d$, $j = 1, \ldots, s$, and for simplicity assume that $\nu$ is a measure on $[-1,1]^d$. For a general multi-index $\alpha$, we have
\begin{align}\nonumber
  \int_D p_\alpha(x) \dx{\mu}(x) = \int_D x^\alpha \dx{\mu}(x) &= \int_{[-1,1]^d} \prod_{j=1}^s \left( a_j^T y\right)^{\alpha^{(j)}} \dx{\nu}(y) \\\label{eq:multinomial-expansion}
                                                               &= \int_{[-1,1]^d} \prod_{j=1}^s \left[ \sum_{|\beta| = \alpha^{(j)}} \left(\begin{array}{c} \alpha^{(j)} \\ \beta \end{array} \right) a_j^\beta y^\beta \right] \dx{\nu}(y),
\end{align}
where we have, for a generic $\beta \in \N_0^d$, 
\begin{align*}
a_j^T &= \left( \begin{array}{ccc} a_{j,1} & \ldots & a_{j,d} \end{array}\right), & a_j^\beta &= \prod_{k=1}^d a_{j,k}^{\beta^{(k)}},
\end{align*}
and the multinomial coefficients
\begin{align*}
\left(\begin{array}{c} \alpha^{(j)} \\ \beta \end{array} \right) &\coloneqq \frac{\alpha^{(j)} !}{\beta !} = \left( \begin{array}{c} \alpha^{(j)} \\ \beta^{(1)},\,\beta^{(2)},\,\ldots,\,\beta^{(d)}\end{array} \right).
\end{align*}
Inspection of \eqref{eq:multinomial-expansion} and using the tensor-product structure of $\nu$, we see that this can be evaluated exactly via sums and products of univariate integrals of $y^\beta$. While exact, this approach becomes quite expensive for large $k \coloneqq \alpha^{(j)}$ (in which case there are $\left( \begin{array}{c} k + d - 1 \\ k - 1 \end{array}\right)$ summands under the product), or when $s$ in large (in which case one must expand an $s$-fold product). Nevertheless, one can use this approach for relatively large $s$ and $d$ since the univariate integrands in \eqref{eq:multinomial-expansion} are very inexpensive to tabulate, and it is only processing the combination of them that is expensive.

\bibliographystyle{plain}
\bibliography{references}
\end{document}